\def\minwrt[#1]{\underset{#1}{\text{minimize }}}
\def\argminwrt[#1]{\underset{#1}{\argmin }}
\def\maxwrt[#1]{\underset{#1}{\text{maximize }}}
\def\supwrt[#1]{\underset{#1}{\text{sup }}}
\def\maxemphwrt[#1]{\underset{#1}{\text{\emph{maximize} }}}
\def\bK{{\bf K}}
\def\bM{{\bf M}}
\def\bU{{\bf U}}
\def\bC{{\bf C}}
\def\bT{{\bf T}}
\def\ett{{\bf 1}}
\def\ccE{{\mathcal{E}}}
\def\ccT{{\mathcal{T}}}
\def\ccP{{\mathcal{P}}}
\def\ccV{{\mathcal{V}}}
\def\ccG{{\mathcal{G}}}
\def\RE{{\mathbb{E}}}
\def\RR{{\mathbb{R}}}
\def\RRext{{\bar{\RR}}}
\newcommand{\trace}{\text{tr}}
\newcommand{\diag}{\mathrm{diag}}
\newcommand{\SpeciesMtx}{\mathfrak{R}}
\newcommand{\SpeciesMtxElem}{\mu}
\newcommand{\dom}{\text{dom}}
\newcommand{\interior}{\text{int}}
\newcommand{\epi}{\text{epi}}
\newcommand{\ri}{\text{ri}}
\DeclareMathOperator*{\argmin}{arg\,min}
\DeclareMathOperator*{\argmax}{arg\,max}
\newcommand{\newC}{\tilde{\bC}}
\newcommand{\indFun}{\iota}
\title{Graph-structured tensor optimization for nonlinear density control and mean field games%
\thanks{Submitted to the editors DATE.
\funding{This work was supported by the Swedish Research Council (VR) under grant 2020-03454, KTH Digital Futures, the NSF under grant 1942523 and 2206576, the Knut and Alice Wallenberg foundation under grant KAW 2018.0349, and by the Wallenberg AI, Autonomous Systems and Software Program (WASP) funded by the Knut and Alice Wallenberg Foundation}}}
\author{%
Axel Ringh%
\thanks{Department of Mathematical Sciences, Chalmers University of Technology and the University of Gothenburg, Gothenburg, Sweden (\email{axelri@chalmers.se}).}
\and Isabel Haasler%
\thanks{Signal Processing Laboratory, LTS 4, \'{E}cole Polytechnique F\'{e}d\'{e}rale de Lausanne, Lausanne, Switzerland (\email{isabel.haasler@epfl.ch}).}
\and Yongxin Chen%
\thanks{School of Aerospace Engineering, Georgia Institute of Technology, Atlanta, GA, USA (\email{yongchen@gatech.edu}).}
\and Johan Karlsson%
\thanks{Division of Optimization and Systems Theory, Department of Mathematics, KTH Royal Institute of Technology, Stockholm, Sweden (\email{johan.karlsson@math.kth.se}).}
}
\begin{document}

\maketitle

\begin{abstract}
In this work we develop a numerical method for solving a type of convex graph-structured tensor optimization problems.
This type of problems, which can be seen as a generalization of multi-marginal optimal transport problems with graph-structured costs,
appear in many applications.
Examples are unbalanced optimal transport and multi-species potential mean field games, where the latter is a class of nonlinear density control problems.
The method we develop is based on coordinate ascent in a Lagrangian dual, and under mild assumptions we prove that the algorithm converges globally. Moreover, under a set of stricter assumptions, the algorithm converges R-linearly.
To perform the coordinate ascent steps one has to compute projections of the tensor, and doing so by brute force is in general not computationally feasible. Nevertheless, for certain graph structures it is possible to derive efficient methods for computing these projections, and here we specifically consider the graph structure that occurs in multi-species potential mean field games. We also illustrate the methodology on a numerical example from this problem class.
\end{abstract}

\begin{keywords}
Tensor optimization, large-scale convex optimization, optimal transport, Sinkhorn algorithm, unbalanced optimal transport, potential mean-field games
\end{keywords}

\begin{MSCcodes}
(2020) 90C06, 90C25, 90C35, 90C46, 49Q22, 91A15, 49M25, 93E20
\end{MSCcodes}

\section{Introduction}

A strong trend in many research fields is the study of large-scale systems consisting of components that are subsystems with specific characteristics.
Examples of such technological systems that are currently emerging include smart electric grids \cite{farhangi2010path}, and road networks with self-driving cars \cite{meyer2014road}.
There are also many such problems in biology, ecology, and social sciences, including, e.g., cell, animal, or human populations \cite{watts1998collective}. 
A major challenge is to understand and control the macroscopic behavior of such complex large-scale systems,
but since the number of agents in such systems
is often too large to model each agent individually,
the overall system is typically viewed as a flow or density control problem. 
In this setting, the aggregate state information of the agents is often described by a distribution or density function, and classical problems of this form include, e.g., network flow problems. More recently, there has been a large interest in control and estimation of densities,
including swarm control \cite{krishnan2018distributed, sinigaglia2021optimal}, modeling and control of epidemics \cite{lee2021controlling}, and covariance control in stochastic systems \cite{chen2016optimal}.
One key result is that certain density control problems of first-order integrators can be seen as optimal transport problems \cite{benamou2000computational}. This correspondence can be extended to general dynamics, and thus the optimal transport problem can be interpreted as a density control problem of agents (subsystems) with general dynamics \cite{hindawi2011mass, chen2017optimal, caluya2021wasserstein}.

While some 
density flow problems can be viewed as two-marginal optimal transport problems,
many problems involve using a time grid in order to model, e.g., congestion, instantaneous costs, or observations \cite{haasler2020optimal}. For such problems it is natural to use versions of the multi-marginal optimal transport problem.
The latter is an optimization problem where a nonnegative tensor is sought to minimize a linear cost subject to constraints on the marginals, where the marginals are projections of the tensor on specific modes.
When modeling the evloution of a system on a time grid, the marginals represent the distributions at different time points $j=1,\ldots, \ccT$.
More specifically, for such control problems with identical and indistinguishable agents,
the problem can be separate into $\ccT-1$ parts, where each part represents the evolution during time interval $[j,j+1]$ for $j=1,\ldots, \ccT-1$. The transition of the agents from time $j$ to time $j+1$ can thus be specified by the bimarginal projection of the tensor onto the joint two marginals $j$ and $j+1$, and thus this problem is a structured tensor problem with structure corresponding to a path graph (see, e.g., \cite{chen2018state,elvander2020multi,haasler2021multimarginal}).  
However, when the agents have heterogeneous dynamics or objectives, the distribution at a given time does not contain all necessary information about the past. Nevertheless, many problems of interest can instead be modeled by introducing additional dependencies between marginals.
For example, traffic flow problems with origin destination constraints can be formulated by introducing dependence between the initial and final node \cite{haasler2020optimal}, and Euler flow problems can be seen as a special case of this \cite{benamou2015bregman}.
By introducing an additional marginal representing different types of agents we can also formulate and solve multi-species dynamic flow problems and large multi-commodity problems \cite{haasler2021scalable}.
The resulting optimization problems are large-scale problems, but algorithms have been developed to solve this type of structured multi-marginal optimal transport problems  \cite{benamou2015bregman, haasler2020optimal, elvander2020multi, haasler2021multimarginal, haasler2021scalable, haasler2020multi, singh2020inference, altschuler2020polynomial, haasler2021control, fan2022complexity}. These extend Sinkhorn's method, developed for solving the bimarginal problem, in which an  entropy regularization is added to the cost function and an approximate solution is computed by using coordinate ascent in the dual problem \cite{cuturi2013sinkhorn, peyre2019computational}.
Interestingly, in the bimarginal problem the entropy regularization term can also be interpreted as introducing stochasticity in the dynamics of the subsystems, and the entropy-regularized problem can in fact be shown to be equivalent to the Schr\"odinger bridge problem \cite{leonard2012schrodinger, leonard2013schrodinger, chen2020stochastic}. This connection has also been extended to the 
multi-marginal setting \cite{haasler2021multimarginal}. Moreover, in this context it is also interesting to note that the algorithms developed to solve this type of structured multi-marginal problems are closely related to the unified propagation and scaling algorithm for inference in graphical models \cite{teh2002propagation}.

Many of the problems in the previous paragraph can be formulated as optimal transport problems with fixed marginals.
Nevertheless, in many situations it is also natural to consider problems where the marginals are not exactly known. A common strategy is then to penalize deviations from some given marginals \cite{georgiou2008metrics, piccoli2014generalized, karlsson2017generalized, chizat2018scaling, chizat2018unbalanced, liero2018optimal, BenCarDiNen19, beier2023unbalanced}. This is sometimes referred to as unbalanced optimal transport.
The cost functions associated with the non-fixed marginals are often convex,
but standard convex optimization methods in general do not scale to this type of large-scale problems.
In this paper, we develop a theoretical framework for a type of convex structured tensor optimization problems,
a generalization of graph-structured the multi-marginal optimal transport problems,
 along with numerical solution methods and convergence results for these. 
We also illustrate how this type of problems can be used to model and solve
multi-species potential mean field games, a generalization of the solution method for potential mean field games developed in \cite{BenCarDiNen19}.
An important observation is that the dual problem has a decomposable form, and can be efficiently solved using dual coordinate ascent \cite{karlsson2017generalized} (cf.~\cite{peyre2019computational}).
Moreover, the structure in these problems can be represented by a graph connecting the marginals, and by utilizing this graph we show how marginal and bimarginal projections of the tensor can be computed efficiently, thus alleviating the computational bottleneck of the algorithm.

The outline of the paper is as follows: in Section~\ref{sec:background} we introduce some background material on optimal transport and convex optimization.
The main results are presented in Section~\ref{sec:convex_graph_tensor_opt}, where we formulate the graph-structured tensor optimization problem of interest and present a primal-dual framework for solving it, together with a Sinkhorn-type algorithm for iteratively solving the dual problem. Conditions for convergence and R-linear convergence are also presented. Based on this, in Section \ref{sec:mean_field_games} we develop an
algorithm for solving multi-species potential mean field games.
This is done by casting the problem as a graph-structured tensor optimization problem and then specializing the general algorithm to the particular instance.
In the section, we also present a numerical example to illustrate the use and performance of the algorithm.
Finally, Section~\ref{sec:conclusions} contains some concluding remarks. Some proofs are deferred to the appendix for improved readability.
This paper builds on \cite{ringh2021efficient}, where we presented an algorithm, without proof of convergence, for the multi-species mean field game in a simplified setting (see also remark~\ref{rem:meanfieldgames}).

\section{Background}\label{sec:background}
This section presents background material, in particular on graph-structured multi-marginal optimal transport.
We also introduce some concepts from convex analysis and convex optimization that are needed in this work.

\subsection{The graph-structured multi-marginal optimal transport problem} \label{sect:OTgraph}
The optimal transport problem seeks a transport plan for how to move mass from an initial distribution to a target distribution with minimum cost. This topic has been extensively studied, see, e.g., the monograph \cite{villani2003topics} and references therein.
An extension of this problem is the multi-marginal optimal transport problem, 
in which a minimum-cost transport plan between several distributions is sought \cite{ruschendorf1995optimal, gangbo1998optimal, ruschendorf2002n, pass2015multi, benamou2015bregman, nenna2016numerical, elvander2020multi}.
In this work we consider the discrete case of the latter, where the marginal distributions are given by a finite set of $\ccT$ nonnegative vectors%
\footnote{To simplify the notation, we assume that all the marginals have the same number of elements, i.e., $\mu_j\in \RR^N$. This can easily be relaxed.}
$\mu_1, \ldots, \mu_\ccT \in \RR^N_+$.
The transport plan and the corresponding cost of moving mass are both represented by $\ccT$-mode tensors $\bM \in \RR^{N^\ccT}_+$ and $\bC \in \RR^{N^\ccT}$, respectively.
More precisely, the elements $\bM^{(i_1 \ldots i_\ccT)}$ and $\bC^{(i_1 \ldots i_\ccT)}$ are the transported mass and the cost of moving mass associated with the tuple $(i_1, \ldots, i_{\ccT})$, respectively.
The total cost of transport is therefore given by
\[
\langle\bC, \bM\rangle := \sum_{i_1, \ldots, i_\ccT} \bC^{(i_1 \ldots i_\ccT)} \bM^{(i_1 \ldots i_\ccT)}.
\]
Moreover, for $\bM$ to be a feasible transport plan, it must have the given distributions as its marginals. To this end, the marginal distributions of $\bM$ are given by the projections $P_j(\bM) \in \RR^N_+$, where elements of this vector are defined as
\begin{equation*}
(P_j(\bM))^{(i_j)} := \sum_{i_1, \ldots, i_{j-1}, i_{j+1}, i_\ccT} \bM^{(i_1 \ldots i_\ccT)},
\end{equation*}
and hence $\bM$ is feasible if $P_j(\bM) = \mu_j$ for $j = 1, \ldots, \ccT$.
A generalization of this optimization problem is to not necessarily impose marginal constraints on all projections $P_j(\bM)$, but only for an index set $\Gamma \subset \{1, \ldots, \ccT\}$.
The discrete multi-marginal optimal transport problem can thus be formulated as
\begin{subequations}\label{eq:discr_multimarginal}
\begin{align}
\minwrt[{\bM \in \RR^{N^{\ccT}}_+}] & \quad \langle \bC, \bM \rangle \\
\text{subject to } & \quad P_j(\bM) = \mu_j,\quad j \in \Gamma.
\end{align}
\end{subequations}

Problem \eqref{eq:discr_multimarginal} is a linear program, however solving it  can be computationally challenging due to the large number of variables. An approach for obtaining approximate solutions in the bimarginal case is to add a small entropy term to the cost function and solve the corresponding perturbed problem \cite{cuturi2013sinkhorn} (see also \cite{peyre2019computational}).
This perturbed problem can be solved by using the so-called Sinkhorn iterations.%
\footnote{In fact, the iterations have been discovered in different settings and therefore also have many different names; see, e.g., \cite{lamond1981bregman,chen2020stochastic}.}
The approach has been extended to the multi-marginal setting \cite{benamou2015bregman, nenna2016numerical, elvander2020multi}, however in this case it only partly alleviates the computational difficulty. More precisely, in the multi-marginal setting the entropy term is defined%
\footnote{In this work, we use the convention that $0 \cdot (\pm \infty) = (\pm \infty) \cdot 0 = 0$.} 
as
\[
D(\bM) := \sum_{i_1, \ldots, i_\ccT} \big( \bM^{(i_1 \ldots i_\ccT)}\log(\bM^{(i_1 \ldots i_\ccT)}) - \bM^{(i_1 \ldots i_\ccT)} + 1 \big),
\]
and the optimal solution to the perturbed problem can be shown to take the form
$
\bM = \bK \odot \bU
$,
see \cite{benamou2015bregman, elvander2020multi},
where  $\bK = \exp(- \bC/ \epsilon)$, $\odot$ denotes the elementwise product, and $\bU$ is the rank-one tensor
$
\bU^{(i_1 \ldots i_\ccT)} = \prod_{j \in \Gamma} u_j^{(i_j)},
$
i.e., $\bU = (\otimes_{j \in \Gamma} u_j) \otimes (\otimes_{j \in \{1, \ldots, \ccT \}\setminus \Gamma} \ett)$, where $\otimes$ denotes the tensor product and $\ett$ denotes a vector of ones.
In fact, the variables $u_j$ correspond to the logarithms of the Lagrangian dual variables in a relaxation of the entropy-regularized version of \eqref{eq:discr_multimarginal}.
Moreover, the (multi-marginal) Sinkhorn iterations iteratively update $u_j$ to match the given marginals:
\begin{equation*}
u_j \leftarrow u_j \odot \mu_j \oslash P_j(\bK \odot \bU), \quad \text{ for } j \in \Gamma,
\end{equation*}
where $\oslash$ denotes elementwise division.
However, in the multi-marginal case, computing $P_j(\bK \odot \bU)$ is challenging since the number of terms in the sum grows exponentially with the number of marginals, and the latter is also reflected in complexity bounds for the algorithm \cite{lin2022complexity}. Nevertheless, in some cases when the underlying cost $\bC$ is structured the projections can be computed efficiently. In particular, this is the case for certain graph-structured costs.

To this end, let $\ccG=(\ccV, \ccE)$ be a connected graph with $\ccT=|\ccV|$ nodes, and consider the optimization problem
\begin{subequations}\label{eq:omt_multi_graph}
\begin{align}
\minwrt[\bM \in \RR_+^{N^{\ccT}}] & \quad \langle \bC, \bM \rangle + \epsilon D(\bM) \label{eq:omt_multi_graph_cost} \\  
\text{subject to} & \quad P_t (\bM) = \mu_t,\quad t \in \tilde \ccV,
\end{align} 
\end{subequations}
where $\tilde \ccV\subset \ccV$ is a set of vertices.
Moreover, consider cost tensor $\bC$ with the structure
\begin{equation}\label{eq:structured_cost}
\bC^{(i_1 \ldots i_\ccT)}=\sum_{(t_1,t_2)\in \ccE}C_{t_1,t_2}^{(i_{t_1}, i_{t_2})},
\end{equation}
where $C_{t_1,t_2} \in \RR^{N \times N}$, which in particular means that the linear cost term in \eqref{eq:omt_multi_graph_cost} takes the form
$\langle \bC, \bM \rangle= \sum_{(t_1,t_2)\in \ccE}\langle C_{t_1,t_2}, P_{t_1,t_2}(\bM)\rangle$.
Here $P_{t_1,t_2}(\bM)\in \RR_+^{N \times N}$ denotes the joint projection of the tensor $\bM$ on the two marginals $t_1$ and $t_2$, given by 
\begin{equation*}
(P_{t_1,t_2}(\bM))^{(i_{t_1}, i_{t_2})} := \sum_{\{i_1, \ldots, i_\ccT\} \setminus \{i_{t_1}, i_{t_2}\}} \bM^{(i_1 \ldots i_\ccT)}.
\end{equation*}
Problem \eqref{eq:omt_multi_graph} with a cost tensor structured according to \eqref{eq:structured_cost} is called a (entropy-regularized) graph-structured multi-marginal optimal transport problem \cite{haasler2021scalable, haasler2021multimarginal, fan2022complexity}. Moreover,
for many graph structures, the projections $P_{t}(\bM)$ and $P_{t_1,t_2}(\bM)$ can be efficiently computed, see, e.g, \cite{benamou2015bregman, haasler2020optimal, elvander2020multi, haasler2021multimarginal, haasler2021scalable, haasler2020multi, singh2020inference, altschuler2020polynomial, haasler2021control, fan2022complexity}, and hence the Sinkhorn iterations can be used to efficiently solve such problems.

\subsection{Convex analysis and optimization}
We need the following definitions and results from convex analysis and optimization. For extensive treatments of the topic, see, e.g., the monographs
\cite{rockafellar1970convex, bauschke2017convex}.
To this end, let $f : \RR^n \to \RRext:= \RR \cup\{\pm\infty\}$ be an extended real-valued function. The \emph{epigraph} of $f$ is defined as
$\epi(f) := \{ (x,\eta) \in \RR^n \times \RR \mid f(x) \leq \eta \}$, and
$f$ is called convex if $\epi(f) \subset \RR^{n+1}$ is a convex set. A function $f$ is \emph{lower-semicontinuous} if and only if $\epi(f)$ is closed \cite[Thm.~7.1]{rockafellar1970convex}. The \emph{effective domain} of $f$ is defined as $\dom(f) := \{ x \in \RR^n \mid f(x) < \infty \}$, and $f$ is called \emph{proper} if $f(x) > -\infty$ for all $x \in \RR^n$ and  $\dom(f) \neq \emptyset$.
A convex set $C$ is called \emph{polyhedral} if it can be written as the intersection of a finite number of closed half spaces. A convex function $f$ is called polyhedral if $\epi(f)$ is polyhedral.
The \emph{Fenchel conjugate} of a function $f$ is defined as $f^*(x^*) := \sup_{x} \langle x^*, x \rangle - f(x)$.
A convex, proper, lower-semicontinuous function $f$ is called \emph{co-finite} if $\epi(f)$ contains no non-vertical half-lines, which is equivalent to that $f^*$ is finite everywhere, i.e., that $\dom(f^*) = \RR^n$, \cite[Cor.~13.3.1]{rockafellar1970convex}.
The \emph{subdifferential} of a function $f$ in a point $x$ is the set $\partial f(x) := \{ u \in \RR^n \mid \langle y - x , u \rangle + f(x) \leq f(y)\; \forall \, y \in \RR^n \}$, and if $f$ is proper, convex, and differentiable in $x$ with gradient $\nabla f(x)$, then $\partial f(x) = \{ \nabla f(x) \}$ \cite[Prop.~17.31]{bauschke2017convex}.
A convex, proper, lower-semicontinuous function $f$ is called \emph{essentially smooth} if i) it is differentiable on $\interior(\dom(f))$, i.e., on the interior of the effective domain, and ii) $\lim_{\ell \to \infty} \| \nabla f(x_\ell) \| \to \infty$ for any sequence $\{ x_\ell \}_\ell \subset \interior(\dom(f))$ that either converges to the boundary of $\interior(\dom(f))$ or is such that $\| x_\ell \| \to \infty$.
An operator $A : \RR^n \to \RR^n$ is called strongly monotone if there exists a $\gamma > 0$ such that $\langle Ax - Ay, x - y \rangle \geq \gamma \| x - y \|^2$ for all $x,y\in \RR^n$.
Finally, let  $\{ x_\ell \}_\ell \subset \RR^n$ be a sequence converging to some $\bar{x} \in \RR^n$. The sequence is said to converge Q-linearly if there exists a $\gamma \in (0,1)$ such that $\| x_{\ell + 1} - \bar{x} \| \leq \gamma \| x_{\ell} - \bar{x} \|$, and the sequence is said to converge R-linearly%
\footnote{This is a slightly weaker notion of convergence, compared to Q-linear, that ``is concerned with the overall rate of decrease in the error, rather than the decrease over each individual step of the algorithm'' \cite[pp.~619-620]{nocedal2006numerical}.}
if there exists a sequence of nonnegative numbers $\{ \gamma_\ell \}_\ell \subset \RR_+$ converging Q-linearly to zero and such that  $\| x_\ell - \bar{x} \| \leq \gamma_\ell$ for all $\ell$ \cite[Sec.~9.2]{ortega1970iterative},\cite[pp.~619-620]{nocedal2006numerical}.

\section{Convex graph-structured tensor optimization}\label{sec:convex_graph_tensor_opt}

In this work, we consider a family of optimization problems that generalizes problems of the form \eqref{eq:omt_multi_graph}.
To this end, let $\ccG=(\ccV, \ccE)$ be a connected graph with $\ccT=|\ccV|$ nodes, and let $\bC \in \RRext^{N^\ccT}$ be a cost tensor that takes the form \eqref{eq:structured_cost}. The convex graph-structured tensor optimization problems of interest here are problems of the form
\begin{align}
\minwrt[\bM \in \RR_+^{N^{\ccT}}] & \quad \langle \bC, \bM \rangle + \epsilon D(\bM) + \sum_{t\in \ccV}g_{t}(P_t(\bM)) + \sum_{(t_1,t_2)\in \ccE}f_{t_1,t_2}(P_{t_1,t_2}(\bM)), \label{eq:omt_multi_graph_convex}
\end{align}
where $g_{t}$ and $f_{t_1,t_2}$ are proper, convex, and lower-semicontinuous functionals; further assumptions on these functionals will be imposed where needed.
The reason for our interest in problems of the form \eqref{eq:omt_multi_graph_convex} is that a number of different applications can be modeled as such problems. In particular, this is true for convex dynamic network flow problems (cf.~\cite{haasler2021scalable}), and potential multi-species mean field games. The latter is studied in detail in Section \ref{sec:mean_field_games}.

\begin{remark}
To see that problems of the form \eqref{eq:omt_multi_graph_convex} is a generalization of the graph-structured multi-marginal optimal transport problem \eqref{eq:omt_multi_graph}, let
$\indFun_A(\cdot)$ denote the indicator function on the set $A \subseteq \RR^n$, i.e., the function
\[
\indFun_{A}(x) := \begin{cases}
0, & \text{if } x \in A \\
\infty, & \text{else,}
\end{cases}
\]
and note that this function is proper, convex, and lower-semicontinuous if and only if $A$ is a nonempty, closed, convex set. Now, \eqref{eq:omt_multi_graph} is recovered from \eqref{eq:omt_multi_graph_convex} by taking $g_{t}(P_t(\bM)) = \indFun_{\{ \mu_t \}}(P_t(\bM))$ for $t \in \tilde{\ccV}$ and $g_{t}(P_t(\bM)) \equiv 0$ otherwise, and  $f_{t_1,t_2}(P_{t_1,t_2}(\bM)) \equiv 0$ for all $(t_1,t_2)\in \ccE$.
Other particular cases of interest are unbalanced versions of \eqref{eq:omt_multi_graph} \cite{beier2023unbalanced} or versions of \eqref{eq:omt_multi_graph} where some of the equality constraints are replaced by inequality constraints, cf.~\cite{haasler2021scalable}.

\end{remark}

\begin{remark}
In problem \eqref{eq:omt_multi_graph_convex} the functions $f_{t_1,t_2}$ and the tensor $\bC$ are defined on the same set of edges $\ccE$. This is done for convenience of notation, and is not a restrictive assumption. To see this, note that it is possible to define certain functions $f_{t_1,t_2}$ to be the zero function, or to take certain matrices $C_{t_1,t_2}$ in the decomposition \eqref{eq:structured_cost} to be the zero-matrix.
\end{remark}

Note that \eqref{eq:omt_multi_graph_convex} is typically a large-scale problem, where the full set of variables may neither be stored nor manipulated directly. Therefore one must utilize the problem structure in order to compute the solution.
In this section, we develop a method for such problems, based on generalized Sinkhorn iterations.
This methodology for handling the problem builds on deriving the Lagrangian dual of an optimization problem that is equivalent to \eqref{eq:omt_multi_graph_convex}, and solving this dual using coordinate ascent.
As we will see, the method exploits the graph structure and the algorithm is efficient when the graph is simple, i.e., the tree-width is low (cf.~\cite{haasler2021scalable, fan2022complexity, haasler2020multi}), and when the functionals $g_{t}$ and $f_{t_1,t_2}$ are in some sense simple.

\subsection{An equivalent problem and existence of solution}
We first introduce and analyze a problem that is equivalent to \eqref{eq:omt_multi_graph_convex}, and give conditions under which the latter has an optimal solution. To this end, introducing the variables $\mu_t$, $t \in \ccV$, and $R_{t_1, t_2}$, $(t_1,t_2)\in \ccE$, problem \eqref{eq:omt_multi_graph_convex} can be rewritten as
\begin{subequations}\label{eq:omt_multi_graph_convex_v2}
\begin{align}
\minwrt[\substack{\bM \in \RR_+^{N^{\ccT}},\;  \mu_t \in \RR^N, \; t \in \ccV \\ R_{t_1, t_2} \in \RR^{N \times N}, \; (t_1,t_2) \in \ccE}]
& \; \langle \bC, \bM \rangle + \epsilon D(\bM) + \sum_{t\in \ccV}g_{t}(\mu_t)  + \sum_{(t_1,t_2)\in \ccE}f_{t_1,t_2}(R_{t_1, t_2}) \label{eq:omt_multi_graph_convex_v2_cost} \\
\text{subject to} \qquad & \;  P_t(\bM) =  \mu_t, \; t \in \ccV \label{eq:omt_multi_graph_convex_v2_const1} \\
& \; P_{t_1,t_2}(\bM) = R_{t_1, t_2}, \; (t_1,t_2)\in \ccE. \label{eq:omt_multi_graph_convex_v2_const2}
\end{align}
\end{subequations}
In order for this to be a well-posed problem, we impose the following assumptions on the functionals involved.

\begin{assumption}\label{ass:primal_feasibility_and_optimality}
Assume that all elements of $\bC$ are strictly larger than $-\infty$, and that $g_{t}$, $t\in \ccV$, and $f_{t_1,t_2}$, $(t_1,t_2)\in \ccE$, are all proper, convex, and lower-semicontinuous. Moreover, assume that there exists a feasible point to \eqref{eq:omt_multi_graph_convex_v2} with finite objective function value, i.e., a nonnegative tensor $\bM$ such that $\langle \bC, \bM \rangle < \infty$, and with marginals and bimarginals as in \eqref{eq:omt_multi_graph_convex_v2_const1}-\eqref{eq:omt_multi_graph_convex_v2_const2}, respectively, such that
\begin{align*}
& g_{t}(\mu_t) < \infty, \quad \text{ for all } t\in \ccV,\\
& f_{t_1,t_2}(R_{t_1, t_2}) < \infty, \quad \text{ for all } (t_1,t_2) \in \ccE.
\end{align*}
\end{assumption}

In fact, this assumption ensures that \eqref{eq:omt_multi_graph_convex_v2} has an optimal solution, as stated in the following lemma.

\begin{lemma}\label{lem:primal_optimality}
If Assumption~\ref{ass:primal_feasibility_and_optimality} holds, then there exists a unique optimal solution to problem \eqref{eq:omt_multi_graph_convex_v2}.
\end{lemma}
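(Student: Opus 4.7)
The plan is to eliminate the auxiliary variables $\mu_t$ and $R_{t_1,t_2}$ via the equality constraints \eqref{eq:omt_multi_graph_convex_v2_const1}--\eqref{eq:omt_multi_graph_convex_v2_const2}, thereby reducing \eqref{eq:omt_multi_graph_convex_v2} to the unconstrained minimization of
\[
\tilde\phi(\bM) := \langle\bC,\bM\rangle + \epsilon D(\bM) + \sum_{t\in\ccV}g^{(t)}(P_t(\bM)) + \sum_{(t_1,t_2)\in\ccE}f^{(t_1,t_2)}(P_{t_1,t_2}(\bM))
\]
over $\bM\in\RR_+^{N^\ccT}$, and then to apply the standard recipe: properness, lower-semicontinuity, and coercivity yield existence, while strict convexity yields uniqueness. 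The substitutions $\mu_t \leftrightarrow P_t(\bM)$ and $R_{t_1,t_2} \leftrightarrow P_{t_1,t_2}(\bM)$ put the feasible set of \eqref{eq:omt_multi_graph_convex_v2} in bijection with $\RR_+^{N^\ccT}$ with matching objective values, so it suffices to treat the reduced problem.

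First I would verify that $\tilde\phi$ is proper, convex, and lower-semicontinuous. The linear term is well-defined and lsc on $\RR_+^{N^\ccT}$ since every $\bC_{i_1\ldots i_\ccT}>-\infty$ and $0\cdot\infty=0$ by convention; the entropy $\epsilon D$ is continuous and strictly convex on $\RR_+^{N^\ccT}$; and each of $g^{(t)}\circ P_t$ and $f^{(t_1,t_2)}\circ P_{t_1,t_2}$ is proper, convex, and lsc as the composition of a proper, convex, lsc function with a continuous linear map. Properness of $\tilde\phi$ itself then follows directly from Assumption~\ref{ass:primal_feasibility_and_optimality}, which exhibits a point with finite objective value.

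The main step is coercivity. Every proper, convex, lsc function admits an affine minorant, so $g^{(t)}\circ P_t$ and $f^{(t_1,t_2)}\circ P_{t_1,t_2}$ are bounded below by affine functions of $\bM$. On $\dom(\tilde\phi)$, finiteness of the objective forces $\bM_{i_1\ldots i_\ccT}=0$ whenever $\bC_{i_1\ldots i_\ccT}=+\infty$, and therefore $\langle\bC,\bM\rangle$ restricts to a genuine finite linear functional, again affinely bounded below. Combining these bounds gives
\[
\tilde\phi(\bM) \;\geq\; \epsilon D(\bM) + \langle a,\bM\rangle + b
\]
for some vector $a$ and scalar $b$, and since $x\mapsto x\log x - x + 1$ grows superlinearly, $D$ dominates any affine term as $\|\bM\|\to\infty$. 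Hence $\tilde\phi$ is coercive on $\RR_+^{N^\ccT}$.

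Finally, properness, lower-semicontinuity, and coercivity imply that each sublevel set $\{\tilde\phi\leq\alpha\}$ is closed and bounded, hence compact, so $\tilde\phi$ attains its infimum on the closed set $\RR_+^{N^\ccT}$. Strict convexity of $\epsilon D$ makes $\tilde\phi$ strictly convex on $\dom(\tilde\phi)$, giving uniqueness of the minimizer $\bM^\star$; the constraints then determine $\mu_t=P_t(\bM^\star)$ and $R_{t_1,t_2}=P_{t_1,t_2}(\bM^\star)$ uniquely, yielding the unique optimum of \eqref{eq:omt_multi_graph_convex_v2}. I expect the delicate point to be the coercivity argument in the presence of extended-real-valued summands and possibly infinite entries in $\bC$; once that is handled as above, everything else is routine.
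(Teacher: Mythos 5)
Your proposal is correct and follows essentially the same route as the paper's proof: eliminate the auxiliary variables to reduce to the composite problem in $\bM$ alone, use Assumption~\ref{ass:primal_feasibility_and_optimality} for properness, combine affine minorants of the convex terms with the superlinear growth of the entropy to get coercivity, and conclude existence by lower-semicontinuity plus compact sublevel sets and uniqueness by strict convexity of $D$. The handling of the $+\infty$ entries of $\bC$ and the recovery of the unique $\mu_t$, $R_{t_1,t_2}$ from $\bM^\star$ match the paper's argument in substance.
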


\begin{proof}
See Appendix~\ref{app:proofs}.
\end{proof}

\begin{remark}
A necessary condition for Assumption~\ref{ass:primal_feasibility_and_optimality} to hold is that there exist vectors $\mu_t \in \RR_+^{N} \cap \dom(g_{t})$, for all $t \in \ccV$, matrices $R_{t_1, t_2} \in \RR_+^{N \times N} \cap \dom(f_{t_1, t_2})$, for all $(t_1,t_2) \in \ccE$, and a constant $\gamma \geq 0$ such that
\begin{align*}
& \mu_t^T \ett = \gamma, \quad \text{ for all } t\in \ccV \\
& R_{t_1, t_2} \ett = \mu_{t_1}, \; R_{t_1, t_2}^T \ett = \mu_{t_2}, \quad \text{ for all } (t_1,t_2) \in \ccE, \\
& \langle C_{t_1,t_2}, R_{t_1, t_2}\rangle < \infty, \quad \text{ for all } (t_1,t_2) \in \ccE.
\end{align*}
However, unless the graph $(\ccV, \ccE)$ is a tree, this is not a sufficient condition for the existence of a tensor that fulfills Assumption~\ref{ass:primal_feasibility_and_optimality}. More precisely, the existence of marginals and bimarginals that are consistent with each other does, in general, not guarantee that there exists a tensor that matches the marginals and bimarginals. A counterexample can be found in \cite[Rem.~3]{haasler2021multimarginal}.
\end{remark}

\subsection{Form of the optimal solution and Lagrangian dual}
Next, we derive the Lagrangian dual of \eqref{eq:omt_multi_graph_convex_v2} and show that there is no duality gap between the primal and the dual problem.

\begin{theorem}\label{thm:lagrangian_dual}
A Lagrangian dual of \eqref{eq:omt_multi_graph_convex_v2} is, up to a constant, given by
\begin{align}
\sup_{\substack{\lambda_t \in \RR^N, \; t \in \ccV \\ \Lambda_{t_1, t_2} \in \RR^{N \times N}, \; (t_1,t_2) \in \ccE}} & \;  -\epsilon \langle \bK, \bU \rangle - \sum_{t \in \ccV}(g_{t})^*(- \lambda_t) - \!\! \sum_{(t_1,t_2)\in \ccE}(f_{t_1,t_2})^*(- \Lambda_{t_1, t_2}), \label{eq:dual}
\end{align}
where $\bK$ and $\bU$ are given by
\begin{subequations}\label{eq:K_and_U}
\begin{align}
\bK^{(i_1 \ldots i_{\ccT})} & = \exp(-\bC^{(i_1 \ldots i_{\ccT})}/\epsilon), \label{eq:K}  \\
\bU^{(i_1 \ldots i_{\ccT})} & = \prod_{t\in \ccV} u_{t}^{(i_t)} \!\!\! \prod_{(t_1,t_2)\in \ccE} \!\!\! U_{t_1, t_2}^{(i_{t_1}, i_{t_2})}  = \prod_{t\in \ccV} \exp \left(\lambda_t^{(i_t)}/\epsilon\right) \!\!\! \prod_{(t_1,t_2)\in \ccE} \!\!\! \exp \left( \Lambda_{t_1, t_2}^{(i_{t_1}, i_{t_2})}/\epsilon \right).  \label{eq:U}
\end{align}
\end{subequations}
Moreover, under Assumption~\ref{ass:primal_feasibility_and_optimality}, the minimum in \eqref{eq:omt_multi_graph_convex_v2} equals the supremum in \eqref{eq:dual} (up to the discarded constant).
Finally, if the dual \eqref{eq:dual} has an optimal solution, then the optimal solution to the primal problem takes the form $\bM^\star = \bK \odot \bU^\star$, where $\bU^\star$ is obtained via \eqref{eq:U} from an optimal solution to \eqref{eq:dual}.
\end{theorem}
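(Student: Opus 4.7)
The plan is to derive the dual in three steps, apply a standard convex-duality theorem for strong duality, and then recover the primal optimizer from the explicit form that falls out of the derivation.

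First I would form the Lagrangian of \eqref{eq:omt_multi_graph_convex_v2} by attaching multipliers $\lambda_t\in\RR^N$ to each marginal constraint \eqref{eq:omt_multi_graph_convex_v2_const1} and $\Lambda_{t_1,t_2}\in\RR^{N\times N}$ to each bimarginal constraint \eqref{eq:omt_multi_graph_convex_v2_const2}, keeping $\bM\in\RR_+^{N^\ccT}$ (implicitly enforced by $\dom D$) and $\mu_t,R_{t_1,t_2}$ as free primal variables. The Lagrangian decouples as a sum of three pieces — one depending only on the $\mu_t$'s, one on the $R_{t_1,t_2}$'s, and one on $\bM$ — so the inner minimization separates. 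The $\mu_t$-piece is $\inf_{\mu_t}\big(g^{(t)}(\mu_t)-\langle\lambda_t,\mu_t\rangle\big)=-(g^{(t)})^*(-\lambda_t)$, and similarly for $R_{t_1,t_2}$.

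Next I would rewrite $\langle\lambda_t,P_t(\bM)\rangle=\sum_{\mathbf i}\lambda_t^{(i_t)}\bM_{\mathbf i}$ and analogously for the bimarginal pairings, so that the $\bM$-piece reduces to a sum of independent scalar problems indexed by $\mathbf i=(i_1,\dots,i_\ccT)$, each of the form $\min_{m\ge 0}\big(c\,m+\epsilon(m\log m - m + 1)-s\,m\big)$ where $c=\bC_{\mathbf i}$ and $s=\sum_t\lambda_t^{(i_t)}+\sum_{(t_1,t_2)\in\ccE}\Lambda_{t_1,t_2}^{(i_{t_1},i_{t_2})}$. Stationarity gives $m^\star=\exp((s-c)/\epsilon)=\bK_{\mathbf i}\bU_{\mathbf i}$ (the nonnegativity is automatically strict because $\log m\to-\infty$ at $0$, so the multiplier for $m\ge 0$ is inactive), and substituting back yields a pointwise value of $-\epsilon m^\star+\epsilon$. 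Summing over $\mathbf i$ contributes $-\epsilon\langle\bK,\bU\rangle+\epsilon N^{\ccT}$, where the second term is the discarded constant. This produces exactly \eqref{eq:dual}.

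For strong duality I would invoke a standard Fenchel–Rockafellar type theorem: the primal objective is a sum of proper, convex, lower-semicontinuous functions of linearly related variables, its value is finite by Assumption~\ref{ass:primal_feasibility_and_optimality}, and the coupling constraints \eqref{eq:omt_multi_graph_convex_v2_const1}--\eqref{eq:omt_multi_graph_convex_v2_const2} are affine. Combined with Lemma~\ref{lem:primal_optimality} (existence and uniqueness of a primal minimizer), the convex–concave minimax can be swapped with zero duality gap; this is the step I expect to be the main obstacle, since one has to check a constraint qualification carefully — here the essential-smoothness of the entropy on $\RR_+^{N^{\ccT}}$ and the fact that one can pick $\mu_t$ and $R_{t_1,t_2}$ in the relative interiors of $\dom g^{(t)}$ and $\dom f^{(t_1,t_2)}$ (using the finiteness afforded by Assumption~\ref{ass:primal_feasibility_and_optimality}) provide the needed regularity. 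These technicalities I would defer to the appendix.

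Finally, for the recovery statement, note that the derivation above already displays the unique unconstrained minimizer in $\bM$ of the Lagrangian at fixed dual variables in closed form as $\bM=\bK\odot\bU$. If the dual \eqref{eq:dual} attains its supremum at $(\lambda^\star,\Lambda^\star)$, then by strong duality the primal optimizer $\bM^\star$ from Lemma~\ref{lem:primal_optimality} and the pair $(\lambda^\star,\Lambda^\star)$ satisfy the KKT conditions of \eqref{eq:omt_multi_graph_convex_v2}. The stationarity condition in $\bM$ forces $\bM^\star=\bK\odot\bU^\star$ with $\bU^\star$ built from $(\lambda^\star,\Lambda^\star)$ via \eqref{eq:U}, completing the proof.
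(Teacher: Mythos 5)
Your derivation of the dual itself — relaxing \eqref{eq:omt_multi_graph_convex_v2_const1}--\eqref{eq:omt_multi_graph_convex_v2_const2}, splitting off the Fenchel conjugates of $g^{(t)}$ and $f^{(t_1,t_2)}$, and minimizing the entropy part elementwise to get $\bM=\bK\odot\bU$ and the term $-\epsilon\langle\bK,\bU\rangle+\epsilon N^{\ccT}$ — matches the paper's argument, and your KKT-based recovery of $\bM^\star=\bK\odot\bU^\star$ is fine \emph{given} zero duality gap. The gap is precisely in your strong-duality step. You propose to verify a Fenchel--Rockafellar/Slater-type constraint qualification on the primal by ``picking $\mu_t$ and $R_{t_1,t_2}$ in the relative interiors of $\dom g^{(t)}$ and $\dom f^{(t_1,t_2)}$ (using the finiteness afforded by Assumption~\ref{ass:primal_feasibility_and_optimality})''. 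But Assumption~\ref{ass:primal_feasibility_and_optimality} only gives a feasible point with $\mu_t\in\dom(g^{(t)})$ and $R_{t_1,t_2}\in\dom(f^{(t_1,t_2)})$; it does not put these marginals in the relative interiors, and there need not exist any feasible tensor whose marginals land there. That stronger requirement is exactly Assumption~\ref{ass:slater}, which the theorem deliberately avoids — it is only invoked later (Corollary~\ref{cor:slater}) to guarantee dual attainment. The small bimarginal example following Theorem~\ref{thm:linear_convergence_v2} shows the distinction is real: Assumption~\ref{ass:primal_feasibility_and_optimality} holds, the duality gap is zero, yet no primal-side relative-interior point exists and the dual supremum is not attained. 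So under the theorem's hypotheses your constraint qualification is simply unavailable, and the argument as written does not establish the ``no duality gap'' claim.

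The paper closes this hole by a different device: it dualizes the dual rather than qualifying the primal. One rewrites \eqref{eq:dual} as a constrained concave problem in $(\bU,\lambda,\Lambda)$ (with the log-inequality linking $\bU$ to the multipliers), observes that Slater's condition for \emph{that} problem is automatic because $\ri(\dom((g^{(t)})^*))\neq\emptyset$ for any proper, convex, lower-semicontinuous $g^{(t)}$ (Lemma~\ref{lem:nonempty_realative_interior}), and then shows that its Lagrangian dual, after the change of variables $\mathbf{Q}=\epsilon\bM$, is the original primal up to a constant (Lemma~\ref{lem:strong_duality}). Strong duality on the dual side then yields equality of the optimal values without any relative-interior hypothesis on the primal data. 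If you want to salvage your route, you must either strengthen the hypotheses to Assumption~\ref{ass:slater} (which proves a weaker theorem) or replace the primal constraint qualification by such a bidualization argument.
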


\begin{proof}
Relaxing each constraint \eqref{eq:omt_multi_graph_convex_v2_const1} and  \eqref{eq:omt_multi_graph_convex_v2_const2} with a multiplier $\lambda_t \in \RR^N$ and $\Lambda_{t_1, t_2} \in \RR^{N \times N}$, respectively, we get the Lagrangian
\begin{align}
& L(\bM, \mu, R, \lambda, \Lambda) := \langle \bC, \bM \rangle + \epsilon D(\bM) + \sum_{t\in \ccV}g_{t}(\mu_t) + \sum_{(t_1,t_2)\in \ccE}f_{t_1,t_2}(R_{t_1, t_2})  \nonumber \\
& \quad + \sum_{t\in \ccV} \lambda_t^T(\mu_t - P_t(\bM))  + \sum_{(t_1,t_2)\in \ccE} \trace [\Lambda_{t_1, t_2}^T (R_{t_1, t_2} - P_{t_1,t_2}(\bM))], \label{eq:lagrangian}
\end{align}
where $\mu$ denote $(\mu_t)_{t \in \ccV}$, and similar for all other variables.
The dual function is given by $\inf L$ over $\bM$, $\mu$, and $R$, but the Lagrangian decouples over $\bM$, $\mu_t$, and $R_{t_1, t_2}$. For the inf over $\mu_t$ we have that
 \[
 \inf_{\mu_t}  \lambda_t^T\mu_t + g_{t}\!(\mu_t) \! = \! - \! \sup_{\mu_t} (-\!\lambda_t)^T \!\mu_t - g_{t}\!(\mu_t) \! = \! -(g_{t})^*(-\!\lambda_t)
 \]
where $\mbox{}^*$ denotes the Fenchel conjugate; and analogous result follows for $f_{t_1, t_2}$ and the inf over $R_{t_1, t_2}$. This means that
\begin{align}
& \inf_{\bM \geq 0, \mu, R} L(\bM, \mu, R, \lambda, \Lambda)  \nonumber \\
& =  \inf_{\bM \geq 0} \mathcal{L}(\bM, \lambda, \Lambda) - \sum_{t\in \ccV}(g_{t})^*(- \lambda_t) -\sum_{(t_1,t_2)\in \ccE}(f_{t_1,t_2})^*(- \Lambda_{t_1, t_2})  \label{eq:lagrangian_derivation}
\end{align}
where
$
\mathcal{L}(\bM, \lambda, \Lambda) \! := \! \langle \bC, \bM \rangle + \epsilon D(\bM) - \sum_{t\in \ccV} \lambda_t^T P_t(\bM) - \sum_{(t_1,t_2)\in \ccE} \trace [\Lambda_{t_1, t_2}^T P_{t_1,t_2}(\bM)].
$
Noticing that $\lambda_t^T P_t(\bM) =  \sum_{i_t = 1}^N \lambda_t^{(i_t)} \sum_{i_1, \ldots, i_{\ccT} \setminus \{ i_t \}} \bM^{(i_1 \ldots i_{\ccT})} = \sum_{i_1, \ldots, i_{\ccT}} \lambda_t^{(i_t)} \bM^{(i_1 \ldots i_{\ccT})}$, and that $\trace [\Lambda_{t_1, t_2}^T P_{t_1,t_2}(\bM)] = \sum_{i_1, \ldots, i_{\ccT}} \Lambda_{t_1, t_2}^{(i_{t_1}, i_{t_2})} \bM^{(i_1 \ldots i_{\ccT})}$,
we see that $\mathcal{L}(\bM, \lambda, \Lambda)$ decouples over the elements of the tensor. Therefore,
the inf in each element is either attained in $0$, or found by setting the first variation to $0$. If $\bC^{(i_1 \ldots i_{\ccT})} = \infty$, then the trivial case $\bM^{(i_1 \ldots i_{\ccT})} = 0$ holds. Otherwise, setting the first variation equal to $0$ gives
\begin{align*}
0 
= \bC^{(i_1 \ldots i_{\ccT})} + \epsilon \log(\bM^{(i_1 \ldots i_{\ccT})}) - \sum_{t\in \ccV} \lambda_t^{(i_t)} - \!\! \sum_{(t_1,t_2)\in \ccE} \Lambda_{t_1, t_2}^{(i_{t_1}, i_{t_2})}
\end{align*}
from which it follows that then
$\bM^{(i_1 \ldots i_{\ccT})} > 0$.
Moreover, solving for $\bM^{(i_1 \ldots i_{\ccT})}$ gives that $\bM = \bK \odot \bU$, where $\bK$ and $\bU$ are given in \eqref{eq:K_and_U}. Note that this form for $\bM$ also holds for the elements of $\bC$ that are infinite.
Plugging this back into $ \mathcal{L}(\bM, \lambda, \Lambda)$ we get that $\inf_{\bM \geq 0} \mathcal{L}(\bM, \lambda, \Lambda) = -\epsilon \langle \bK, \bU \rangle + N^\ccT \epsilon$, which, after removing the constant, together with \eqref{eq:lagrangian_derivation} gives the dual problem \eqref{eq:dual}.
Finally, for improved readability the detailed proof of that there is no duality gap is deferred to Lemma~\ref{lem:strong_duality} in Appendix~\ref{app:proofs}.
\end{proof}

By using the change of variables implicit in \eqref{eq:U}, problem \eqref{eq:dual} can be expressed equivalently as
\begin{align}
\supwrt[\substack{u_t \in \RR^N_+, \; t \in \ccV \\ U_{t_1, t_2} \in \RR^{N \times N}_+, \; (t_1,t_2) \in \ccE}] & -\epsilon \langle \bK, \bU \rangle - \sum_{t\in \ccV}(g_{t})^*\big(- \epsilon\log(u_t) \big) \nonumber \\
& - \sum_{(t_1,t_2)\in \ccE}(f_{t_1,t_2})^*\big(- \epsilon\log(U_{t_1, t_2}) \big). \label{eq:dual_U}
\end{align}
Moreover, under a Slater-type condition of for the primal problem, i.e., that the relative interior (denoted $\ri$)%
\footnote{The relative interior of a set $A$ consists of all points in $A$ that are interior when $A$ is regarded as a subset of its affine hull, see \cite[Ch.~6]{rockafellar1970convex}.}
of the effective domains of the cost functions in \eqref{eq:omt_multi_graph_convex} have a nonempty intersection,
we have that the suprema in \eqref{eq:dual} and \eqref{eq:dual_U} are attained.

\begin{assumption}\label{ass:slater}
Assume that there exists an $\bM > 0$ such that $\langle \bC, \bM \rangle < \infty$, and with marginals $(\mu_t)_{t \in \ccV}$ and bimarginals $(R_{t_1, t_2})_{(t_1, t_2) \in \ccE}$ satisfying \eqref{eq:omt_multi_graph_convex_v2_const1} and \eqref{eq:omt_multi_graph_convex_v2_const2}, respectively, so that
\begin{itemize}
\item for all $g_{t}$ and $f_{t_1, t_2}$ that are polyhedral, $\mu_t \in \dom(g_{t})$ and $R_{t_1, t_2} \in \dom(f_{t_1, t_2})$, 
\item for all $g_{t}$ and $f_{t_1, t_2}$ that are not polyhedral, $\mu_t \in \ri(\dom(g_{t}))$ and $R_{t_1, t_2} \in \ri(\dom(f_{t_1, t_2}))$.
\end{itemize}
\end{assumption}

\begin{corollary}\label{cor:slater}
Given Assumption~\ref{ass:slater} the conclusions of Theorem~\ref{thm:lagrangian_dual} hold, with the addition that the dual \eqref{eq:dual} is guaranteed to have a nonempty set of optimal solutions.
\end{corollary}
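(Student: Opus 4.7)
The plan is to observe first that Assumption~\ref{ass:slater} is strictly stronger than Assumption~\ref{ass:primal_feasibility_and_optimality}. Indeed, any strictly positive $\bM>0$ with $\langle\bC,\bM\rangle<\infty$ is in particular nonnegative, and if the induced marginals $\mu_t$ and bimarginals $R_{t_1,t_2}$ lie in the (relative) interiors of the effective domains of the corresponding $g^{(t)}$ and $f^{(t_1,t_2)}$ (or just in their effective domains in the polyhedral case), then these functions evaluate to finite values. Hence Lemma~\ref{lem:primal_optimality} and Theorem~\ref{thm:lagrangian_dual} already provide the existence and uniqueness of the primal optimum, zero duality gap, and the representation $\bM^\star=\bK\odot\bU^\star$ conditional on a dual optimum existing. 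It therefore only remains to establish that the supremum in \eqref{eq:dual} is attained.

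For this step, the natural tool is the generalized Slater theorem from convex duality (e.g.\ Rockafellar, \emph{Convex Analysis}, Thm.~28.2 and Cor.~28.2.2, or the Fenchel--Rockafellar theorem in the form given in Bauschke--Combettes, Thm.~15.23). Rewriting the primal problem \eqref{eq:omt_multi_graph_convex_v2} in the compact form
\[
\min_{\bM \geq 0}\ \langle \bC,\bM\rangle + \epsilon D(\bM) + \sum_{t\in\ccV} g^{(t)}(P_t\bM) + \sum_{(t_1,t_2)\in\ccE} f^{(t_1,t_2)}(P_{t_1,t_2}\bM),
\]
the projections $P_t$ and $P_{t_1,t_2}$ are linear maps and the constraint $\bM\ge 0$ is polyhedral, so the Slater hypothesis to verify is precisely that there exists $\bM>0$ (strict feasibility with respect to the cone constraint) whose marginals and bimarginals lie in $\ri(\dom\, g^{(t)})$ and $\ri(\dom\, f^{(t_1,t_2)})$ for the nonpolyhedral functions, and merely in $\dom\, g^{(t)}$ and $\dom\, f^{(t_1,t_2)}$ for the polyhedral ones. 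This is exactly Assumption~\ref{ass:slater}.

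Invoking the cited result then yields the existence of Lagrange multipliers $\lambda_t^\star$ and $\Lambda_{t_1,t_2}^\star$ attaining the supremum in the dual \eqref{eq:dual}. The change of variables $u_t^\star=\exp(\lambda_t^\star/\epsilon)$ and $U_{t_1,t_2}^\star=\exp(\Lambda_{t_1,t_2}^\star/\epsilon)$ supplies an optimum for the equivalent formulation \eqref{eq:dual_U}, and combined with the last sentence of Theorem~\ref{thm:lagrangian_dual} produces the representation $\bM^\star=\bK\odot\bU^\star$ of the primal optimum.

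The main obstacle I anticipate is a bookkeeping one rather than a conceptual one: carefully separating the polyhedral and nonpolyhedral pieces when fitting our primal into the hypotheses of the standard theorem. In particular, the linear equalities \eqref{eq:omt_multi_graph_convex_v2_const1}--\eqref{eq:omt_multi_graph_convex_v2_const2} and the constraint $\bM\ge 0$ are polyhedral (so feasibility suffices), while the constraints implicit in the effective domains of $g^{(t)}$ and $f^{(t_1,t_2)}$ may be nonpolyhedral (so relative interior is required). Once this matching is done, the conclusion follows from the cited duality theorem without further calculation, and no additional estimates are required.
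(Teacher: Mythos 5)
Your argument is correct and is essentially the paper's own proof: both reduce the corollary to observing that Assumption~\ref{ass:slater} supplies a feasible point satisfying a Slater-type constraint qualification for \eqref{eq:omt_multi_graph_convex_v2} (so Lemma~\ref{lem:primal_optimality} and Theorem~\ref{thm:lagrangian_dual} apply) and then invoke a standard Rockafellar attainment result, the paper packaging this through the bifunction/perturbation formalism and \cite[Cor.~30.5.2]{rockafellar1970convex}, which is the same duality as your Fenchel--Rockafellar formulation with the stacked projection maps (the identification of that Fenchel dual with \eqref{eq:dual} is exactly the computation $\inf_{\bM\geq 0}\mathcal{L} = -\epsilon\langle\bK,\bU\rangle + N^\ccT\epsilon$ already carried out in the proof of Theorem~\ref{thm:lagrangian_dual}). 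The only point to tighten is the citation: \cite[Thm.~28.2]{rockafellar1970convex} is stated for inequality-constrained ordinary programs and \cite[Thm.~15.23]{bauschke2017convex} has no polyhedral carve-out, so to obtain the mixed ``$\dom$ for polyhedral, $\ri(\dom)$ for non-polyhedral'' refinement built into Assumption~\ref{ass:slater} you need a polyhedral-refined attainment theorem (e.g., the refinements in \cite[Sec.~31]{rockafellar1970convex}, or the strong-consistency route via \cite[Cor.~30.5.2]{rockafellar1970convex} taken in the paper) --- precisely the bookkeeping you anticipated, not a conceptual gap.
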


\begin{proof}
The result follows from \cite[Ch.~29 and 30]{rockafellar1970convex}.
\end{proof}

Even if the Slater-type condition in Assumption~\ref{ass:slater} is not fulfilled, the form $\bM = \bK \odot \bU$ will be important in deriving a convergent algorithm for solving \eqref{eq:omt_multi_graph_convex_v2}.

\subsection{Coordinate ascent iterations for solving the dual problem}

In this section we derive an efficient solution method for \eqref{eq:omt_multi_graph_convex_v2}, based on performing coordinate ascent in the dual problem \eqref{eq:dual} (or, equivalently, in \eqref{eq:dual_U}).
To this end, let $\phi( (\lambda_{t})_{t \in \ccV}, (\Lambda_{t_1, t_2})_{(t_1, t_2) \in \ccE})$ denote the objective function in the dual problem \eqref{eq:dual}.
Given an iterate $((\lambda_{t}^{k})_{t \in \ccV}, (\Lambda_{t_1, t_2}^k)_{(t_1, t_2) \in \ccE})$, in a coordinate ascent step we cyclically select an element $j \in \ccV$ or $(j_1, j_2) \in \ccE$ and compute an update to the corresponding variable by taking $\lambda_{j}^{k+1}$ to be in
\begin{subequations}\label{eq:argmax_lambda}
\begin{equation}\label{eq:argmax_small_lambda}
\argmax_{\lambda_j \in \RR^N} \quad \phi( \lambda_{j}, (\lambda_{t}^k)_{t \in \ccV \setminus \{ j \}}, (\Lambda_{t_1, t_2}^k)_{(t_1, t_2) \in \ccE}) ,
\end{equation}
or $\Lambda_{j_1, j_2}^{k+1}$ to be in
\begin{equation}\label{eq:argmax_capital_lambda}
\argmax_{\Lambda_{j_1, j_2} \in \RR^{N \times N}} \phi( \Lambda_{j_1, j_2}, (\lambda_{t}^k)_{t \in \ccV}, (\Lambda_{t_1, t_2}^k)_{(t_1, t_2) \in \ccE \setminus \{(j_1, j_2)\}}),
\end{equation}
\end{subequations}
respectively, while taking  $\lambda_{t}^{k+1} = \lambda_{t}^{k}$ and $\Lambda_{t_1, t_2}^{k+1} = \Lambda_{t_1, t_2}^k$ for all other elements. In order for this to be a well-defined algorithm, we need that the set of maximizing arguments in \eqref{eq:argmax_lambda} is always nonempty.
To guarantee this, we impose the following assumption (which is milder than Assumption~\ref{ass:slater}).

\begin{assumption}\label{ass:dual_ascent_attainment}
Assume that $\bC < \infty$ and that for each index $j \in \ccV$ there exists a $\mu_j > 0$ so that
\begin{itemize}
\item if $g_{j}$ is polyhedral, then $\mu_j \in \dom(g_{j})$, 
\item if $g_{j}$ is not polyhedral, then $\mu_j \in \ri(\dom(g_{j}))$,
\end{itemize}
and analogously for each index $(j_1, j_2) \in \ccE$, $R_{j_1, j_2}$, and $f_{j_1, j_2}$.
\end{assumption}

\begin{lemma}\label{lem:argmax_lambda}
Under Assumptions~\ref{ass:primal_feasibility_and_optimality} and \ref{ass:dual_ascent_attainment}, the subproblems in \eqref{eq:argmax_lambda} always have a nonempty set of maximizers.
\end{lemma}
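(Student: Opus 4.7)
The plan is to reduce each subproblem in \eqref{eq:argmax_lambda} to the minimization of a proper convex lower--semicontinuous function on a Euclidean space, and then establish coercivity so that the infimum is attained. Fix a node $j \in \ccV$; the edge case will be analogous. First, I would isolate the $\lambda_j$-dependent part of $\phi$ by writing $\bU$ in \eqref{eq:U} as the factor $u_j = \exp(\lambda_j/\epsilon)$ times a tensor $\bU_{-j}$ collecting all other factors, so that $\langle \bK,\bU\rangle = \langle \exp(\lambda_j/\epsilon), q_j \rangle$ with $q_j := P_j(\bK \odot \bU_{-j}) \in \RR^N$. Under Assumption~\ref{ass:dual_ascent_attainment} we have $\bC < \infty$, so $\bK > 0$ elementwise; together with the strict positivity of $\bU_{-j}$ (a product of exponentials) this forces $q_j > 0$ componentwise. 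The subproblem \eqref{eq:argmax_small_lambda} then reduces, up to an additive constant, to minimizing $\tilde F(\lambda_j) := \epsilon \langle q_j, \exp(\lambda_j/\epsilon)\rangle + (g^{(j)})^*(-\lambda_j)$ over $\lambda_j \in \RR^N$, and attainment of this minimum is equivalent to the claim.

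The function $\tilde F$ is proper, convex, and lower--semicontinuous: the exponential term is continuous and finite on $\RR^N$, and $(g^{(j)})^*$ inherits properness, convexity, and lsc from $g^{(j)}$ by Assumption~\ref{ass:primal_feasibility_and_optimality}. To obtain attainment I would establish coercivity by showing that the recession function $\tilde F^{\infty}(d)>0$ for every direction $d\neq 0$. Case 1: if some component $d^{(i)}>0$, then $\epsilon q_j^{(i)} \exp(td^{(i)}/\epsilon)$ grows exponentially in $t$, while $(g^{(j)})^*(-td)$ is bounded below by an affine function of $t$ (every proper convex lsc function admits an affine minorant), so $\tilde F^\infty(d) = +\infty$. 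Case 2: if $d \leq 0$ with $d \neq 0$, the exponential term stays bounded, so the coercivity must come from $(g^{(j)})^*$. I would then invoke Assumption~\ref{ass:dual_ascent_attainment} to select a strictly positive $\mu_j > 0$ with $\mu_j \in \dom g^{(j)}$, and apply Fenchel--Young to get $(g^{(j)})^*(-td) \geq -t\langle d, \mu_j\rangle - g^{(j)}(\mu_j)$. Since $\mu_j > 0$ componentwise and $d \leq 0$ has at least one strictly negative entry, $\langle d, \mu_j\rangle < 0$, so this lower bound tends linearly to $+\infty$ with slope $-\langle d, \mu_j\rangle > 0$.

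Combining the two cases yields $\tilde F^\infty(d) > 0$ for every $d \neq 0$, whence $\tilde F$ is coercive and its infimum is attained; this exhibits a maximizer for \eqref{eq:argmax_small_lambda}. The edge subproblem \eqref{eq:argmax_capital_lambda} is treated identically after vectorizing $\Lambda_{j_1,j_2}$, with $q_j$ replaced by $Q_{j_1,j_2} := P_{j_1,j_2}(\bK \odot \bU_{-(j_1,j_2)}) > 0$ and with the Slater point $\mu_j$ replaced by the strictly positive matrix $R_{j_1,j_2} \in \dom f^{(j_1,j_2)}$ provided by Assumption~\ref{ass:dual_ascent_attainment}. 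The main obstacle I expect is the second coercivity case: it hinges critically on the strict positivity built into Assumption~\ref{ass:dual_ascent_attainment}, since only strict positivity of the Slater point forces $\langle d, \mu_j\rangle < 0$ for every nonzero $d \leq 0$ and thereby lets $(g^{(j)})^*$ supply coercivity precisely in the directions where the exponential term saturates; mere membership of $\mu_j$ in the (relative) interior of $\dom g^{(j)}$ would not suffice, so the role of the ``$\mu_j > 0$'' clause must be tracked carefully throughout the argument.
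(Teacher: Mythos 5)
Your proposal is correct, but it proves attainment by a genuinely different route than the paper. You reduce the subproblem to minimizing $\tilde F(\lambda_j) = \epsilon\langle q_j, \exp(\lambda_j/\epsilon)\rangle + (g^{(j)})^*(-\lambda_j)$ with $q_j = w_j > 0$ (valid since $\bC<\infty$ gives $\bK>0$ and the remaining factors of $\bU$ are exponentials), and then establish coercivity directionwise: exponential growth dominates the affine minorant of the conjugate when some component of the direction is positive, and the Fenchel--Young bound $(g^{(j)})^*(-td)\geq -t\langle d,\mu_j\rangle - g^{(j)}(\mu_j)$ with the strictly positive $\mu_j\in\dom(g^{(j)})$ from Assumption~\ref{ass:dual_ascent_attainment} forces linear growth with positive slope in the remaining directions $d\leq 0$, $d\neq 0$; positivity of the recession function plus lower semicontinuity then yields attainment, and the edge case is identical with $W_{j_1,j_2}$ and $R_{j_1,j_2}$. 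The paper instead exhibits \eqref{eq:argmax_small_lambda} as the Lagrangian dual of an auxiliary primal problem (the original objective with the fixed multipliers entering as linear terms, constrained by $P_j(\bM)=\mu_j$), notes that $\bM = \mu_j\otimes(\otimes_{t\neq j}\ett) > 0$ is a Slater point by the same strict positivity in Assumption~\ref{ass:dual_ascent_attainment}, and invokes strong duality with dual attainment along the lines of Corollary~\ref{cor:slater} (cf.\ Tseng's Lemma~3.1). What your argument buys is self-containedness and a sharper view of what is really needed: you never use the polyhedral versus relative-interior distinction in Assumption~\ref{ass:dual_ascent_attainment}, only $\mu_j>0$ with $\mu_j\in\dom(g^{(j)})$, and you make explicit that strict positivity is exactly what supplies coercivity in the directions where the exponential term saturates; what the paper's route buys is brevity given the duality machinery already set up for Corollary~\ref{cor:slater} and a direct link to the framework used later in the convergence proofs. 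One cosmetic point common to both proofs: if one of the fixed conjugate terms at the current iterate is $+\infty$, the objective $\phi$ is identically $-\infty$ in $\lambda_j$ and the argmax statement should be read as applying to the non-constant part, which is exactly what your reduction to $\tilde F$ does.
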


\begin{proof}
To prove the lemma, we restrict our attention to one subproblem of the form \eqref{eq:argmax_small_lambda}; for subproblems of the form \eqref{eq:argmax_capital_lambda} it follows analogously. Now, note that problem \eqref{eq:argmax_small_lambda} can be see as the Lagrangian dual of the primal problem
\begin{align*}
\minwrt[\bM \in \RR_+^{N^{\ccT}},\;  \mu_{j} \in \RR^N]
& \; \langle \bC, \bM \rangle + \epsilon D(\bM) + g_{j}(\mu_{j}) - \!\!\! \sum_{t \in \ccV \setminus \{ j \}} \!\! (\lambda_t^k)^T P_t(\bM) \\
& \quad - \sum_{(t_1,t_2)\in \ccE} \trace [(\Lambda_{t_1, t_2}^{k})^T P_{t_1,t_2}(\bM)] \\
\text{subject to} \quad & \; P_{j}(\bM) =  \mu_{j}.
\end{align*}
Moreover, using Assumption~\ref{ass:dual_ascent_attainment} we have that $\mu_j > 0$ and $\bM = \mu_j \otimes (\otimes_{t \in \ccV \setminus \{ j \}} \ett) > 0$ is a point fulfilling Slater's condition for the above problem. Therefore, following \cite[Ch.~29 and 30]{rockafellar1970convex}
we have that strong duality holds between these two problems, and in particular that the dual \eqref{eq:argmax_small_lambda} has a nonempty set of maximizers (cf.~\cite[Lem.~3.1]{tseng1993dual}).
\end{proof}

By the above lemma, the coordinate ascent steps in \eqref{eq:argmax_lambda} are well-defined. Moreover, since each problem is concave and unconstrained, the optimal solution is where the subgradient is zero.
To compute the subgradients, first note that
\[
P_j(\bK \odot \bU) \oslash u_j = \!\!
\sum_{i_1, \ldots, i_{\ccT} \setminus i_{j}} \!\! \bK^{(i_1 \ldots i_\ccT)} \!\!
\prod_{t\in \ccV \setminus \{j\} } \!\! u_{t}^{(i_t)} \!\!
\prod_{(t_1,t_2)\in \ccE} \!\! U_{t_1, t_2}^{(i_{t_1}, i_{t_2})}
\]
is a well-defined vector which is independent of $u_j$. We therefore define
\begin{subequations}\label{eq:script_P}
\begin{equation}\label{eq:script_P_marginal}
w_j := P_j(\bK \odot \bU) \oslash u_j,
\end{equation}
and note that this means that $P_j(\bK \odot \bU) = u_j \odot w_j$.
Analogously, we also define
\begin{equation}\label{eq:script_P_bimarginal}
W_{j_1, j_2} := P_{j_1, j_2}(\bK \odot \bU) \oslash U_{j_1, j_2},
\end{equation}
\end{subequations}
which in the same way is a well-defined matrix, independent of $U_{j_1, j_2}$, and hence $P_{j_1, j_2}(\bK \odot \bU) = U_{j_1, j_2} \odot W_{j_1, j_2}$.

Next, note that 
$
\partial \langle \bK, \bU \rangle/\partial \lambda_{j}^{(i_{j})}
= - \exp\left( \lambda_{j}^{(i_{j})} /\epsilon \right) w_{j}^{(i_{j})} = - u_{j}^{(i_{j})} w_{j}^{(i_{j})}
$
with $\bK$ and $\bU$ given as in \eqref{eq:K_and_U} and $w_{j}$ as in \eqref{eq:script_P_marginal}. Thus, in each update of the variable $\lambda_{j}$ one has to solve the inclusion problem%
\begin{subequations}\label{eq:sinkhorn_inclusion}
\begin{equation}\label{eq:sinkhorn_inclusion_marginal}
0 \in \partial_{\lambda_{j}} \phi \! = \! - \exp\left( \lambda_{j} /\epsilon \right) \odot w_{j} + \partial (g_{j})^*(- \lambda_{j}),
\end{equation}
where $\partial_{\lambda_{j}}$ denotes the subdifferential with respect to $\lambda_{j}$. By an analogous derivation, in each update of the variable $\Lambda_{j_1, j_2}$ one has to solve the inclusion problem
\begin{align}
0 \in  \partial_{\Lambda_{j_1, j_2}} \phi = &
- \exp\left( \Lambda_{j_1, j_2} /\epsilon \right) \odot W_{j_1, j_2} + \partial(f_{j_1, j_2})^*(- \Lambda_{j_1, j_2}).
\label{eq:sinkhorn_inclusion_bimarginal}
\end{align}
\end{subequations}
To verify that the two equities in \eqref{eq:sinkhorn_inclusion} hold, see, e.g., \cite[Cor.~16.38]{ bauschke2017convex}
These inclusions, and hence the updates, can be reformulated in terms of the transformed dual variables $u_{j}$ and $U_{j_1, j_2}$, in which case they read
\begin{subequations}\label{eq:sinkhorn_inclusion_u}
\begin{align}
& 0 \in - u_{j} \odot w_{j} + \partial (g_{j})^*\big(- \epsilon \log( u_{j})\big), \label{eq:sinkhorn_inclusion_marginal_u} \\
& 0 \in - U_{j_1, j_2} \odot W_{j_1, j_2} + \partial(f_{j_1, j_2})^*\big(-\! \epsilon\log(U_{j_1, j_2}) \big). \label{eq:sinkhorn_inclusion_bimarginal_U}
\end{align}
\end{subequations}
This is summarized in Algorithm~\ref{alg:generalized_sinkhorn}.
However, note that directly computing $w_j$ and $W_{j_1, j_2}$ needed in \eqref{eq:sinkhorn_inclusion_u} by brute-force is computationally demanding, and effectively numerically infeasible for large-scale problems. Therefore, from this perspective Algorithm~\ref{alg:generalized_sinkhorn} is an ``abstract algorithm''. Nevertheless, for many graph structures it is possible to compute the projections efficiently
by sequentially eliminating the modes of the tensor, see \cite{benamou2015bregman, haasler2020optimal, elvander2020multi, haasler2021multimarginal, haasler2021scalable, haasler2020multi,haasler2021control, singh2020inference, altschuler2020polynomial}.
In particular, in Section \ref{sec:mean_field_games} we show how this is done for the application of multi-species potential mean field games (see Algorithm~\ref{alg:multi_species}). Moreover,
storing and using intermediate results of eliminated modes, the procedure can also be understood as a message-passing scheme \cite{haasler2020multi}.
Finally, under relatively mild assumptions, Algorithm~\ref{alg:generalized_sinkhorn} is convergent in the following sense.

\begin{algorithm}[tb]
  \begin{algorithmic}[1]
    \STATE Give: graph $\ccG=(\ccV,\ccE)$, cost tensor $\bC$ that decouples according to $\ccG$, functions $(g_{t})^*$, for $t \in \ccV$, and $(f_{t_1, t_2})^*$, for $(t_1, t_2) \in \ccE$, nonnegative initial guesses $(u_{t}^{0})_{t\in \ccV}$ and $(U_{t_1, t_2}^{0})_{(t_1,t_2)\in \ccE}$.
    \STATE $k = 0$
    \WHILE{Not converged}	
    \STATE $k = k+1$
    \FOR{$j \in \ccV$ and $(j_1, j_2)\in \ccE$}
    \STATE Update $u_j^k$ by solving \eqref{eq:sinkhorn_inclusion_marginal_u} with $w_{j}$ as in \eqref{eq:script_P_marginal}.
    \STATE Update $U_{j_1, j_2}^k$ by solving \eqref{eq:sinkhorn_inclusion_bimarginal_U} with $W_{j_1, j_2}$ as in \eqref{eq:script_P_bimarginal}.
    \ENDFOR    
    \ENDWHILE
    \RETURN $(u_{t}^k)_{t\in \ccV}$ and $(U_{t_1, t_2}^k)_{(t_1,t_2)\in \ccE}$.
  \end{algorithmic}
  \caption{Generalized Sinkhorn method for solving \eqref{eq:omt_multi_graph_convex_v2}.}
  \label{alg:generalized_sinkhorn} 
\end{algorithm}

\begin{theorem}\label{thm:convergence}
Given Assumptions~\ref{ass:primal_feasibility_and_optimality} and \ref{ass:dual_ascent_attainment}, and assume further that
\begin{enumerate}
\item  $g_{t}$, $t \in \ccV$, and $f_{t_1, t_2}$, $(t_1,t_2) \in \ccE$, are all continuous on $\dom(g_{t})$ and $\dom(f_{t_1, t_2})$, respectively,
\item for all $g_{t}$, $t \in \ccV$, and $f_{t_1, t_2}$, $(t_1,t_2) \in \ccE$, that are not polyhedral, the feasible point in Assumption~\ref{ass:primal_feasibility_and_optimality} is such that $\mu_t \in \ri(\dom(g_{t}))$ and $R_{t_1, t_2} \in \ri(\dom(f_{t_1, t_2}))$, respectively. 
\end{enumerate}
 Let $(u_{t}^k)_{t\in \ccV}$ and $(U_{t_1, t_2}^k)_{(t_1,t_2)\in \ccE}$ be the iterates of Algorithm~\ref{alg:generalized_sinkhorn} at iteration $k$, and let $\bU^k$ be the corresponding tensor as in \eqref{eq:U}. Moreover, let $\bM^k = \bK \odot \bU^k$. Then $(\bM^k)_k$ is a bounded sequence that converges to the optimal solution to \eqref{eq:omt_multi_graph_convex_v2}.
Furthermore, if the set of optimal solutions to \eqref{eq:dual} is nonempty and bounded, then $((u_{t}^k)_{t\in \ccV}, (U_{t_1, t_2}^k)_{(t_1,t_2)\in \ccE})_k$ is a bounded sequence and every cluster point is an optimal solution to \eqref{eq:dual_U}.
\end{theorem}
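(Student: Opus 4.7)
The plan is to recast Algorithm~\ref{alg:generalized_sinkhorn} as block coordinate ascent on the concave dual objective $\phi$ in \eqref{eq:dual}, invoke a Tseng-type convergence theorem for coordinate methods on convex problems with separable nonsmooth structure (in the spirit of \cite{tseng1993dual}), and then transfer convergence to the primal via the representation $\bM^k=\bK\odot\bU^k$ together with strong duality from Theorem~\ref{thm:lagrangian_dual}.

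First I would verify that the dual objective fits the structural hypotheses of Tseng's framework. Because $\bC$ is everywhere finite by Assumption~\ref{ass:dual_ascent_attainment}, the coupling term $-\epsilon\langle\bK,\bU\rangle$, viewed as a function of $(\lambda_t,\Lambda_{t_1,t_2})$, is concave and everywhere continuously differentiable in each block, and the remaining terms $-(g^{(t)})^{*}(-\lambda_t)$ and $-(f^{(t_1,t_2)})^{*}(-\Lambda_{t_1,t_2})$ are proper, concave, and upper semi-continuous in their respective blocks. Lemma~\ref{lem:argmax_lambda} guarantees that each block subproblem \eqref{eq:argmax_lambda} has a nonempty solution set, so the algorithm is well-defined and produces a monotone nondecreasing sequence of dual values $\phi^k$. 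Hypothesis (1) of the theorem provides the continuity of each block function on its effective domain, and hypothesis (2) supplies the polyhedral/relative-interior dichotomy needed to rule out the ``jamming'' phenomenon on the boundary of $\dom(g^{(t)})$ or $\dom(f^{(t_1,t_2)})$; these together are precisely the conditions under which Tseng's theorem asserts that $\phi^k\uparrow\phi^{\star}$, where $\phi^{\star}$ is the optimal dual value, and that every cluster point of the iterates is a maximizer of $\phi$.

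Next I would push this to the primal by noting that the derivation of Theorem~\ref{thm:lagrangian_dual} shows that $\bM^k:=\bK\odot\bU^k$ is the unconstrained minimizer of the partial Lagrangian $\mathcal{L}(\bM,\lambda^k,\Lambda^k)$, so the Lagrangian value at the current iterate coincides with $\phi$ evaluated at $(\lambda^k,\Lambda^k)$. Combined with Assumption~\ref{ass:primal_feasibility_and_optimality} and the strict convexity and coercivity of $\epsilon D(\cdot)$, a standard level-set argument along the lines of Lemma~\ref{lem:primal_optimality} shows that the sequence $(\bM^k)_k$ lies in a bounded set. The first-order optimality conditions \eqref{eq:sinkhorn_inclusion} at any cluster point $\bM^{\infty}$ deliver primal feasibility for the marginal and bimarginal constraints, while strong duality $\phi^\star=$ primal optimum forces $\bM^\infty$ to attain the primal optimal value. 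Uniqueness from Lemma~\ref{lem:primal_optimality} then identifies every cluster point with $\bM^\star$, and hence the whole sequence converges.

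For the dual part, boundedness of the iterates follows from the fact that $\phi^k\uparrow\phi^\star$ traps $(\lambda^k,\Lambda^k)$ in a sublevel set of the concave $\phi$, and a sublevel set of a proper concave function is bounded precisely when the set of maximizers is nonempty and bounded (a standard recession-cone argument). Any cluster point is then an optimizer by Tseng's theorem, and this transfers to \eqref{eq:dual_U} through the logarithmic change of variables in \eqref{eq:U}. The hard part will be the careful handling of the non-polyhedral case in hypothesis (2): one must ensure that the successive coordinate updates never push an iterate onto a face of $\dom((g^{(t)})^*)$ or $\dom((f^{(t_1,t_2)})^*)$ where the subdifferential would be empty, so that the inclusions \eqref{eq:sinkhorn_inclusion_u} remain solvable throughout and the ``regularity at a feasible point'' condition required by Tseng's result can be propagated along the iterations.
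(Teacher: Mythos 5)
Your proposal takes essentially the same route as the paper: both view Algorithm~\ref{alg:generalized_sinkhorn} as block coordinate ascent on the dual and lean on Tseng's 1993 framework \cite{tseng1993dual} for the core convergence claims (the paper verifies its hypotheses by showing $h(\bM)=\langle \bC,\bM\rangle+\epsilon D(\bM)$ is strictly convex, co-finite, and has polyhedral domain, which is the primal counterpart of your check that the coupling term $-\epsilon\langle\bK,\bU\rangle$ is finite and smooth), then upgrade cluster-point optimality of $(\bM^k)_k$ to whole-sequence convergence via uniqueness of the primal optimum, and get dual boundedness from the assumed nonempty bounded dual solution set just as you do. The ``hard part'' you flag at the end is exactly the point the paper disposes of by remarking that Tseng's Theorem~3.1 is stated under slightly stronger assumptions but that its conclusions can be checked to persist under hypotheses (1)--(2) of the theorem.
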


\begin{proof}
To prove the theorem, let  
$h(\bM) := \langle \bC, \bM \rangle + \epsilon D(\bM)$, which is a strictly convex function (cf.~\cite[Ex.~9.35]{bauschke2017convex}). Moreover $\dom(h) = \RR_{+}^{N^\ccT}$, and hence polyhedral.
Next, we observe that $h$ is co-finite, since the Fenchel conjugate of $h$ is given by%
\footnote{Compare with the expression $\inf_{\bM} \mathcal{L}(\bM, \lambda, \Lambda) = -\epsilon \langle \bK, \bU \rangle + N^\ccT \epsilon$ in the proof of Theorem~\ref{thm:lagrangian_dual}.}
\begin{align*}
h^*(\bT) & = -\epsilon \sum_{i_1 \ldots i_\ccT} \exp((\bT^{(i_1 \ldots i_\ccT)} - \bC^{(i_1 \ldots i_\ccT)})/\epsilon) - 1 = - \epsilon \langle \bK, \exp(\bT/\epsilon) \rangle + N^\ccT \epsilon,
\end{align*}
see \cite[Ex.~13.2 and Prop.~13.23]{bauschke2017convex}.
Therefore, following along the lines of \cite[Sec.~6]{tseng1993dual}, we have that $(\bM^k)_k$ is a bounded sequence and that every cluster point is an optimal solution to \eqref{eq:omt_multi_graph_convex_v2}. In particular, \cite[Thm.~3.1]{tseng1993dual} imposes some slightly stronger assumptions,%
\footnote{More precisely, to directly apply the result in \cite[Thm.~3.1]{tseng1993dual}, we must assume that the feasible point in Assumption~\ref{ass:primal_feasibility_and_optimality} is such that $\bM > 0$; see \cite[Ass.~B]{tseng1993dual} where ``$f_0$'' corresponds to $\langle \bC, \bM \rangle + \epsilon D(\bM)$. For an example of where this weaker assumption is indeed used, see Example~\ref{ex:no_positive_M}.}
but it is readily checked in all places where these stronger assumptions are invoked that the same conclusions hold true in this particular case under the weaker assumptions. For brevity, we omit the details of the modifications needed.

Since $(\bM^k)_k$ is a bounded sequence and every cluster point is optimal to \eqref{eq:omt_multi_graph_convex_v2}, by the uniqueness of the optimal solution $\bM^\star$ the sequence must converge to it.
To see this, note that since $(\bM^k)_k$ is bounded, if it does not converge then it must have at least two cluster points. This is a contradiction, since every cluster points must be optimal, and the optimal solution is unique.
Finally, the last statement of the theorem follows similarly from \cite[Thm.~3.1(b)]{tseng1993dual}.
\end{proof}

The above theorem guarantees convergence, but does not guarantee how fast the iterates converge. In particular, in order to guarantee R-linear convergence
we need to impose further assumptions on the functions involved.

\begin{theorem}\label{thm:linear_convergence_v2}
Given Assumption~\ref{ass:primal_feasibility_and_optimality}, further assume that there exists an $\bM > 0$ with marginals and bimarginals $(\mu_t)_{t \in \ccV}$ and $(R_{t_1, t_2})_{(t_1, t_2) \in \ccE}$ satisfying \eqref{eq:omt_multi_graph_convex_v2_const1} and \eqref{eq:omt_multi_graph_convex_v2_const2}, respectively, and that all functions $g_{t}$ and $f_{t_1, t_2}$ are such that either
\begin{enumerate}[i)]
\item the function is a polyhedral indicator function and $\mu_t \in \dom(g_{t})$ or $R_{t_1, t_2} \in \dom(f_{t_1, t_2})$, respectively, or
\item the function is co-finite, essentially smooth, continuous on the effective domain, and the gradient operator is strongly monotone and Lipschitz continuous on any compact convex subset of the interior of the effective domain, and so that $\mu_t \in \interior(\dom(g_{t}))$ or $R_{t_1, t_2} \in \interior(\dom(f_{t_1, t_2}))$, respectively.
\end{enumerate}
Under these assumptions, let $(u_t^k)_{t \in \ccV}$ and $(U_{t_1, t_2}^k)_{(t_1, t_2) \in \ccE}$ be the iterates of Algorithm~\ref{alg:generalized_sinkhorn}, and let $\bM^k = \bK \odot \bU^{k}$.
Then $\bM^k \to \bM^\star$ at least R-linearly, where $\bM^\star$ is the unique optimal solution to \eqref{eq:omt_multi_graph_convex_v2}, and the cost function in \eqref{eq:dual_U}, evaluated in $(u_t^k)_{t \in \ccV}$ and $(U_{t_1, t_2}^k)_{(t_1, t_2) \in \ccE}$, converges to the optimal value of \eqref{eq:omt_multi_graph_convex_v2} at least R-linearly.
\end{theorem}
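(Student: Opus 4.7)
The plan is to apply the R-linear convergence results from Tseng's framework \cite{tseng1993dual}, extending the analysis already used in Theorem~\ref{thm:convergence}. Recall that in the proof of Theorem~\ref{thm:convergence} we established that $h(\bM) = \langle \bC, \bM \rangle + \epsilon D(\bM)$ is strictly convex and co-finite, and identified the dual iteration with the block coordinate ascent scheme analyzed by Tseng. The goal is now to verify the additional hypotheses that upgrade plain convergence to R-linear convergence: essentially, a local error bound on the dual objective near the set of optimal dual solutions, together with suitable continuity/growth properties of the blocks.

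First I would verify the hypotheses block by block. For a function of type i), being a polyhedral indicator means its Fenchel conjugate is the support function of a polyhedron, i.e., polyhedral. This places the corresponding dual block in the ``polyhedral'' part of Tseng's decomposition, for which a global error bound is automatic. For a function of type ii), the assumption that it is co-finite, essentially smooth, and continuous on its effective domain with a strongly monotone and Lipschitz gradient on compact convex subsets of $\interior(\dom)$ translates, via standard Fenchel duality, into its conjugate being finite-valued, differentiable everywhere, with a Lipschitz and locally strongly monotone gradient. Together with the fact (already established in the proof of Theorem~\ref{thm:convergence}) that the dual iterates $((\lambda_t^k),(\Lambda_{t_1,t_2}^k))$ remain in a bounded set (using the Slater-type feasibility $\bM > 0$ in the strict interior), this yields uniform Lipschitz and strong-monotonicity constants along the trajectory. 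Combining the polyhedral and strongly-monotone pieces gives the local error bound required by Tseng's R-linear convergence theorem applied to the dual objective in \eqref{eq:dual}.

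Next I would invoke \cite[Thm.~4.1]{tseng1993dual} (or the analogous R-linear result therein) to conclude that the dual objective values $\phi((\lambda_t^k),(\Lambda_{t_1,t_2}^k))$ converge R-linearly to the optimal dual value, which by the strong duality established in Theorem~\ref{thm:lagrangian_dual} equals the optimal value of \eqref{eq:omt_multi_graph_convex_v2}. To promote R-linear convergence of the objective to R-linear convergence of $\bM^k = \bK\odot\bU^k$ toward $\bM^\star$, I would use strict convexity and co-finiteness of $h$: by Fenchel duality and the dual representation $\bM^k = \nabla h^*(\bT^k)$ where $\bT^k$ assembles the dual variables along the modes, $\nabla h^*$ is locally Lipschitz on the compact set containing $(\bT^k)$, and the primal suboptimality $h(\bM^k)-h(\bM^\star)$ bounds $\|\bM^k-\bM^\star\|^2$ up to a constant via a quadratic growth estimate (the entropy term provides strong convexity on any bounded subset of $\RR^{N^\ccT}_{+}$ bounded away from the coordinate axes, and on the parts touching the boundary the limit $\bM^\star$ is approached through strictly positive iterates).

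The main obstacle I expect is the careful bookkeeping required to verify the local error bound in the mixed setting where different blocks satisfy different assumptions (polyhedral vs.\ smooth strongly convex). One must ensure that the uniform Lipschitz and strong-monotonicity constants for the smooth blocks can be taken over a compact set that contains the entire trajectory, not only a neighborhood of the limit; this requires leveraging the boundedness of $(\bM^k)$ from Theorem~\ref{thm:convergence} together with the essential smoothness assumption, which forces the dual iterates associated with smooth blocks to remain away from the boundary behavior that would blow up the gradient. Once this uniform control is in place, the rest of the argument reduces to a direct application of Tseng's R-linear results and a standard transfer from dual-objective R-linear convergence to primal-iterate R-linear convergence.
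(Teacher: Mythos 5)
Your overall strategy---verify the blockwise hypotheses and then appeal to the known R-linear convergence theory for dual (block) coordinate ascent on linearly constrained problems with separable convex costs---is the same in spirit as the paper's, but two concrete points in your execution constitute genuine gaps. First, the rate theorem you lean on is not where you cite it: \cite{tseng1993dual} is what the paper uses for plain convergence in Theorem~\ref{thm:convergence}, and it does not supply an R-linear rate; the paper instead invokes \cite[Thm.~6.1]{luo1993convergence}. This is not merely a citation swap, because applying that result requires putting \eqref{eq:omt_multi_graph_convex_v2} into the linearly constrained separable form of \cite[Eq.~(1.1)]{luo1993convergence}: the marginal and bimarginal couplings \eqref{eq:omt_multi_graph_convex_v2_const1}--\eqref{eq:omt_multi_graph_convex_v2_const2} are assembled into an explicit constraint matrix $E$ (the projection operators $\mathtt{P}_t$ and $\mathtt{P}_{t_1,t_2}$ stacked against negative identities), and the type-i) blocks are handled by replacing each polyhedral indicator cost by finitely many linear inequality rows appended to $E$. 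The hypotheses i)--ii) of the theorem (co-finite, essentially smooth, strongly monotone and Lipschitz gradient on compact convex subsets of the interior, Slater point $\bM>0$) are phrased precisely so that this theorem applies; your ``local error bound'' discussion gestures at what is inside that black box but does not replace identifying the correct result and carrying out the reduction to its setting.

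Second, your transfer from dual-objective R-linear convergence to R-linear convergence of $\bM^k$ rests on an inequality that fails as stated: $h(\bM^k)-h(\bM^\star)$ does not bound $\|\bM^k-\bM^\star\|^2$, since $\bM^k$ need not satisfy the coupling constraints and $\bM^\star$ minimizes the full objective of \eqref{eq:omt_multi_graph_convex_v2} rather than $h$ alone, so that difference can even be negative. A correct transfer would go through the dual: the Lagrangian is separable and, via the entropy, strongly convex in $\bM$ on bounded sets (the iterates are bounded by Theorem~\ref{thm:convergence}), so the \emph{dual} suboptimality bounds $\|\bM^k-\bM^\star\|^2$; in the paper's route even this repair is unnecessary, because \cite[Thm.~6.1]{luo1993convergence} delivers R-linear convergence of the primal estimates $\bM^k$ together with the dual values directly. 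Relatedly, your assertion that the dual iterates remain bounded ``using the Slater-type feasibility'' is not something established earlier---Theorem~\ref{thm:convergence} only gives boundedness of the dual sequence under nonemptiness and boundedness of the dual optimal set---so if your argument needs trajectory-wide Lipschitz and strong-monotonicity constants for the smooth blocks, that boundedness must itself be proved rather than assumed.
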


\begin{proof}
Assume first that all functions are as in ii). In this case, note that \eqref{eq:omt_multi_graph_convex_v2_cost} is separable in the different variables, and that $E$ in \cite[Eq.~(1.1)]{luo1993convergence} is of the form
\[
E^T = \left[ \begin{array}{ccc|ccc}
\mathtt{P}_1^T & \ldots & \mathtt{P}_{\ccT}^T  & \mathtt{P}_{1,2}^T & \ldots & \mathtt{P}_{\ccT, \ccT-1}^T \\
\hline
      & -I     &            &           & 0 \\
\hline
      & 0      &            &           & -I
\end{array}
\right]
\]
where $\mathtt{P}_t$ is a matrix so that $\mathtt{P}_t \text{vec}(\bM)$ is the projection on the $t$th marginal and $\mathtt{P}_{t_1, t_2}$ is a matrix such that $\mathtt{P}_{t_1, t_2} \text{vec}(\bM)$ is the projection on the $(t_1, t_2)$-bimarginal. This means that $\mathtt{P}_t^T$ and $\mathtt{P}_{t_1, t_2}^T$ are the corresponding back-projections.
Now, under the given assumptions the results in \cite[Thm~6.1]{luo1993convergence} are directly applicable.

In the case that some of the functions are of the form as in i), this cost function can be replaced by a finite number of inequality constraints.
By adding the corresponding inequalities in the matrix $E$ above, the above argument show R-linear convergence of the algorithm.
\end{proof}

\begin{remark}
One assumption in Theorem~\ref{thm:linear_convergence_v2} is that all functions $g_{t}$ and $f_{t_1, t_2}$ (that are not polyhedral indicator functions) are such that they are differentiable on the interior of their effective domains.
Under this assumption, all inclusions in \eqref{eq:sinkhorn_inclusion} and \eqref{eq:sinkhorn_inclusion_u} are in fact equalities on the interior of the effective domain.
\end{remark}

\begin{example}\label{ex:no_positive_M}
To illustrate some of the differences between the results presented so far, here we consider a small bimarginal example.
To this end, let $\bM, \bC \in \RR^{2 \times 2}$, and consider the problem
\[
\minwrt[\bM \in \RR_+^{2 \times 2}] \; D(\bM) \; \text{ subject to } \; P_1(\bM) \leq \begin{bmatrix} 1 \\ 2 \end{bmatrix}, \; P_{12}(\bM) \geq
\begin{bmatrix}
1 & 0\\
0 & 0
\end{bmatrix},
\]
where we for simplicity have taken $\bC = 0$ and $\epsilon = 1$. 
The two constraints together imply that $\bM^{(1,2)} = 0$ for any feasible solution, and hence neither the conditions in Assumption~\ref{ass:slater} nor the ones in Theorem~\ref{thm:linear_convergence_v2} are fulfilled.
Nevertheless,
the conditions in Assumption~\ref{ass:primal_feasibility_and_optimality} are fulfilled, and hence the problem has a unique optimal solution (Lemma~\ref{lem:primal_optimality}); the latter is given by
\[
\bM^\star = \begin{bmatrix}
1 & 0 \\ 1 & 1
\end{bmatrix}.
\]
 Moreover, the conditions in Assumption~\ref{ass:dual_ascent_attainment} are fulfilled, and hence each step in the algorithm is therefore well-defined (Lemma~\ref{lem:argmax_lambda}). In fact, the conditions in Theorem~\ref{thm:convergence} are fulfilled, which guarantees that the dual ascent algorithm is converging to
the optimal solution.
For suitable initial conditions the coordinate ascent method gives the iterates
\[
u_1^{k} = \begin{bmatrix}
1/(\exp(k) + 1) \\ 1
\end{bmatrix}, \qquad
U_{1,2}^k = 
\begin{bmatrix}
\exp(k) & 1 \\
1 & 1
\end{bmatrix},
\]
and the corresponding dual cost
converges towards the optimal value as $k \to \infty$. However, the dual problem does not attain an optimal solution since  $(U^{(1,2)})^k$ diverges as $k \to \infty$.
Finally, by evaluating $\| \bM^{k} - \bM^\star \|_2$ it can be seen that in fact the iterates converge R-linearly, which indicates that there might be room for improvement with respect to the conditions in Theorem~\ref{thm:linear_convergence_v2}.
\end{example}

As a final remark, note that Assumptions~\ref{ass:slater} and \ref{ass:dual_ascent_attainment} both enforce that we must have $\bC < \infty$; the first one implicitly and the second one explicitly. Similarly, the functions $g_{t}$ and $f_{t_1, t_2}$ must have effective domains that include marginals and bimarginals that are elementwise strictly positive, and hence they cannot, e.g., be indicator functions on singletons with zero elements. For some applications this is not fulfilled, and in particularly this is the case for the example in Section~\ref{sec:mean_field_games}.
Nevertheless, the assumptions can be weakened somewhat to accommodate for this, similar to \cite[Sec.~4.1]{haasler2021scalable}.
More specifically, if any element $\bC^{(i_1 \ldots i_\ccT)} = \infty$, then we can fix $\bM^{(i_1 \ldots i_\ccT)} = 0$ and remove it from the set of variables. This means that $\bM$ is technically no longer a tensor, but the marginal and bimarginal projections can still be defined, and the above derivations carry over to this setting. Similarly, if $\dom(g_{j})$ is such that $\mu_j^{(i_j)} = 0$, then we can remove all the variables $\bM^{(i_1 \ldots i_\ccT)}$ with indices $\{ (i_1, \ldots, i_{j-1}, i_{j}, i_{j+1}, \ldots, i_\ccT) \mid i_t = 1, \ldots, N \text{ for } t \neq j \}$, and analogously for $f_{j_1, j_2}$ and the bimarginals.
From the perspective of Algorithm~\ref{alg:generalized_sinkhorn}, it is interesting to note that in the first case $\bK^{(i_1 \ldots i_\ccT)}=0$, and in the second case we can take $u_j^{(i_j)} = 0$.

\subsection{Extension to multiple costs on each marginal}\label{sec:multiple_costs}

In some problems, marginals and bimarginals can be associated with multiple functions, typically when they are both associated with a cost and an inequality constraint. 
To handle such cases, we consider a modified version of problem \eqref{eq:omt_multi_graph_convex_v2} that takes the form%
\footnote{For ease of notation, we have the same number of functions $\kappa_1$ and $\kappa_2$ associated with each marginal and bimarginal, respectively, however this can easily be relaxed. Moreover, note that the constraints implicitly ensure that $\mu_{t, k_1} = \mu_{t, k_1'}$, for all $k_1, k_1' = 1, \ldots, \kappa_1$ and all $t \in \ccV$, for any feasible point, and similarly for the bimarginals.}
\begin{align}
\minwrt[\substack{
\bM \in \RR_+^{N^{\ccT}}, \; \mu_{t, k_1} \in \RR_+^N, \\
R_{t_1, t_2, k_2} \in \RR_+^{N \times N} \\
t \in \ccV \text{ and } k_1 = 1, \ldots, \kappa_1 \\
(t_1,t_2) \in \ccE \text{ and } k_2 = 1, \ldots, \kappa_2
}]
& \langle \bC, \bM \rangle + \epsilon D(\bM) + \sum_{t\in \ccV} \sum_{k_1=1}^{\kappa_1} g_{t, k_1}(\mu_{t, k_1}) \nonumber \\[-25pt]
& \quad + \!\!\sum_{(t_1,t_2)\in \ccE} \sum_{k_2=1}^{\kappa_2} f_{t_1,t_2, k_2}(R_{t_1, t_2, k_2}) \label{eq:omt_multi_graph_convex_multiple_costs_v2}\\
\text{subject to} \qquad & P_t(\bM) =  \mu_{t, k_1}, \; k_1 = 1,\ldots,\kappa_1, \; t \in \ccV \nonumber \\
& P_{t_1,t_2}(\bM) = R_{t_1, t_2, k_2}, \; k_2 = 1, \ldots, \kappa_2, \; (t_1,t_2)\in \ccE. \nonumber
\end{align}
By modifying the arguments in the previous sections, it is straightforward to derive a Lagrangian dual of \eqref{eq:omt_multi_graph_convex_multiple_costs_v2} and to see that if the dual problem has an optimal solution, then the optimal solution to \eqref{eq:omt_multi_graph_convex_multiple_costs_v2} is of the form $\bM = \bK \odot \bU$, where
\begin{align*}
& \bU^{(i_1 \ldots i_{\ccT})} = \left(\prod_{t\in \ccV} \prod_{k_1=1}^{\kappa_1} u_{t, k_1}^{(i_t)}\right)\left( \prod_{(t_1,t_2)\in \ccE} \prod_{k_2 = 1}^{\kappa_2} U_{t_1, t_2, k_2}^{(i_{t_1}, i_{t_2})}\right)
\end{align*}
and where $\bK$ is as before (see \eqref{eq:K}). This can be interpreted as splitting $u_{t}$ as $u_{t} = \odot_{k_1=1}^{\kappa_1} u_{t, k_1}$, and similarly $U_{t_1, t_2} = \odot_{k_2=1}^{\kappa_2} U_{t_1, t_2, k_2}$. Moreover, this means that the coordinate ascent inclusion for $u_{j, \tilde{k}_1}$ is given by
\begin{align*}
& 0 \in - u_{j, \tilde{k}_1} \odot \! \Big(\bigodot_{k_1 \neq \tilde{k}_1}  u_{j, k_1} \!  \Big) \! \odot w_{j} + \partial (g_{j, \tilde{k}_1})^*\big(- \epsilon \log( u_{j, \tilde{k}_1})\big),
\end{align*}
where $w_{j}$ is defined analogously to \eqref{eq:script_P_marginal} as
\begin{align}
w_{j} & =  P_{j}(\bK \odot \bU) \oslash \Big(\bigodot_{k_1 = 1}^{\kappa_1}  u_{j, k_1}  \Big). \label{eq:script_P_modified}
\end{align}
Similar expressions hold for the inclusion problem for $U_{t_1, t_2, \tilde{k}_2}$.
Furthermore, reexamining the proof of Theorem~\ref{thm:convergence} and \ref{thm:linear_convergence_v2}, it can be readily seen that by modifying the assumptions accordingly, the results can be extended to this setting. For brevity, we omit explicitly stating these results.
Finally, by reexamining the argument of sequentially eliminating the modes of the tensor as in \cite{elvander2020multi, haasler2021multimarginal, haasler2021scalable}, one can see that the efficiency in computing $w_j$ in \eqref{eq:script_P_modified} (and also $W_{j_1, j_2}$) only depends on the underlying graph structure $(\ccV, \ccE)$, and not on the number of cost functions associated with each marginal (and bimarginal). Therefore, we can still efficiently solve the inclusions for ``simple functions'' and graph structures for which the projections can be easily computed.

\section{Multi-species potential mean field games}\label{sec:mean_field_games}
An important tool for analyzing and controlling systems of systems, which has emerged during the last decades, is mean field games \cite{jovanovic1988anonymous, huang2006large, lasry2007mean, huang2012social, caines2018mean, djehiche2017mean}.
Mean field games are models of dynamic games where each player's action is negligible to other players at the individual level, but where the actions are significant when aggregated.
A subclass of such games are potential mean field games. These can be seen as density control problems, where the density abides to a controlled Fokker-Planck equation with distributed control \cite{lasry2007mean}. This type of control problems have been studied in, e.g., \cite{cardaliaguet2015second, bensoussan2013mean, chen2018steering}.
An important generalization of mean field games is the multi-species setting, where the population consists of several different types of agents or species
\cite{huang2006large, lasry2007mean, lachapelle2011mean, achdou2017mean, cirant2015multi, bensoussan2018mean}.
In this section, we show that discretizations of potential multi-species mean field games take the form of a convex graph-structured tensor optimization problem \eqref{eq:omt_multi_graph_convex_v2}. 
By also deriving efficient methods for computing the corresponding  projections needed in Algorithm~\ref{alg:generalized_sinkhorn}, we here develop an efficient numerical solution algorithm for solving such problems.
In order to do so, we will first consider the nonlinear density control problem obtained in the single-species setting, and its corresponding discretization.

\subsection{The single-species problem}
Let $X \subset \RR^n$ be a state space, and consider a set of infinitesimal agents on $X$ 
which obeys the (It\^o) stochastic differential equation
\begin{equation}\label{eq:SDE}
dx(t) = f(x(t))dt + B(x(t))\big(v(x(t), t)dt + \sqrt{\epsilon}dw\big),
\end{equation}
with initial condition $x(0) = x_0 \sim \rho_0(x)$, where $w$ is a $m$-dimensional Wiener process.
More precisely,
assume that $f : X \to \RR^n$ and $B : X \to \RR^{n \times m}$ are continuously differentiable with bounded derivatives,
in which case,
under suitable conditions on the (Markovian) feedback $v$, there exists a unique solution to \eqref{eq:SDE} almost surely, see, e.g, \cite[Thm.~V.4.1]{fleming1975deterministic}, \cite[pp.~7-8]{bensoussan2013mean}.
Moreover, under suitable regularity conditions \cite{bensoussan2013mean, cardaliaguet2015second} the density $\rho(t, \cdot)$ which describe the distribution of particles at time point $t$ exists and is the solution of a controlled Fokker-Planck equation (cf.~\cite[p.~72]{astrom1970introduction}).
A potential mean field game can then be reformulated as the density optimal control problem \cite{lasry2007mean}
\begin{subequations}\label{eq:densitycontrol}
    \begin{align}
        \minwrt[\rho, v] & \int_0^1 \!\!\! \int_X \! \frac{1}{2} \|v\|^2 \! \rho dx dt + \!\! \int_0^1 \!\!\! \mathcal{F}_t(\rho(t,\cdot)) dt + \mathcal{G}(\rho(1,\cdot)) \label{eq:densitycontrol1} \\
       \text{subject to } & \, \frac{\partial \rho}{\partial t} + \nabla\cdot((f + Bv) \rho )- \frac{\epsilon}{2} \sum_{i,k=1}^n \frac{\partial^2 (\sigma_{ik}  \rho)}{\partial x_i \partial x_k} = 0 \label{eq:densitycontrol2} \\
        & \, \rho(0, \cdot) = \rho_0. \label{eq:densitycontrol3}	
    \end{align}
\end{subequations}
Here, $\sigma(x) := B(x)B(x)^T$. Moreover, $\mathcal{F}_t$ and $\mathcal{G}$ are functionals on $L_2\cap L_\infty$, and we assume that they are proper, convex, and lower-semicontinuous. We also assume that $\mathcal{F}_t$ is piece-wise continuous with respect to $t$.

To discretize problem \eqref{eq:densitycontrol}, we rewrite it as a problem over path space. To this end,
let $\ccP^v$ denote the distribution on path space, i.e., a probability distribution over $C([0,1], X)$ := the set of continuous functions from $[0,1]$ to $X$, induced by the controlled process \eqref{eq:SDE}. In particular, this means that for the marginal of $\ccP^v$ corresponding to time $t$, denoted $\ccP^v_t$, we have that $\ccP^v_t = \rho(t, \cdot)$, where $\rho$ is the solution to \eqref{eq:densitycontrol2} and \eqref{eq:densitycontrol3}.
Moreover, let $\ccP^0$ denote the corresponding (uncontrolled) Wiener process with initial density $\rho_0$. 
By the Girsanov theorem (see, e.g., \cite[pp.~156-157]{Follmer88}, \cite[p.~321]{dai1991stochastic}), we have that
	\begin{equation}\label{eq:relativeentropy}
		\frac{1}{2}\!\int_X\! \int_0^1 \! \|v\|^2 \rho dt dx
		\! = \! \frac{1}{2} \RE_{\ccP^v} \left\{ \int_0^1 \! \|v\|^2 dt \right\} \! = \! 
		\epsilon {\rm KL} (\ccP^v \| \ccP^0) 
	\end{equation}
where ${\rm KL} (\cdot \| \cdot)$ is the Kullback-Leibler divergence, see, e.g., \cite{chen2016relation, gentil2017analogy, chen2018steering, BenCarDiNen19,leonard2012schrodinger, leonard2013schrodinger}.
To ensure that \eqref{eq:relativeentropy} holds, it is important that the control signal and the noise enter the system through the same channel,  as in \eqref{eq:SDE} \cite{chen2016optimalPartI, chen2017optimal}. Moreover, the link between stochastic control and entropy provided by \eqref{eq:relativeentropy} has recently led to several novel applications of optimal control \cite{chen2016relation, chen2016optimalPartI, chen2017optimal, chen2020stochastic, caluya2021wasserstein}.

By using \eqref{eq:relativeentropy}, the problem \eqref{eq:densitycontrol} can be reformulated as 
\begin{subequations}\label{eq:densitycontrol_KL_formulation}
    \begin{align}
    \minwrt[\ccP^v] & \quad \epsilon\, {\rm KL} (\ccP^v \| \ccP^0) + \int_0^1 \mathcal{F}_t(\ccP^v_t)dt + \mathcal{G}(\ccP^v_1) \\
    \text{subject to } & \quad  \ccP^v_0 = \rho_0.
    \end{align}
\end{subequations}
Next, we discretize this problem in both time and space. More precisely, discretizing over time into the time points $0, \Delta t, 2\Delta t, \ldots, 1$, where $\Delta t = 1/\ccT$, and over space into the grid points $x_1, \ldots, x_N$,
we get that $\ccP^v$ becomes a tensor $\bM\in \RR_+^{N^{\ccT + 1}}$, i.e., a nonnegative $(\ccT+1)$-mode tensor that represents the flow of the agents. More precisely, $\bM^{(i_0 \ldots i_\ccT)}$ is the discrete approximation corresponding to the probability of a sample path that passes through the discrete states
$x_{i_0}, \ldots, x_{i_\ccT}$ at the corresponding discrete time instances. Similarly, $\ccP^0$ becomes a nonnegative $(\ccT+1)$-mode tensor of probabilities corresponding to the evolution of the (uncontrolled) Wiener process. For reasons that will be clear shortly, we call this tensor $\bK$, and let $\bK^{(i_0 \ldots i_\ccT)} = \gamma \exp(-\bC^{(i_0 \ldots i_\ccT)}/\epsilon)$ for some tensor $\bC$ and where $\gamma > 0$ is a normalizing constant so that $\sum_{i_0, \ldots, i_\ccT} \bK^{(i_0 \ldots i_\ccT)} = 1$. The Kullback-Leibler divergence can then be discretized as
\begin{align*}
& \epsilon {\rm KL} (\ccP^v \| \ccP^0) \approx \epsilon \sum_{i_0, \ldots, i_\ccT} \log\left(\frac{\bM^{(i_0 \ldots i_\ccT)}}{ \bK^{(i_0 \ldots i_\ccT)}}\right) \bM^{(i_0 \ldots i_\ccT)} \\
 & =  \epsilon  \sum_{i_0, \ldots, i_\ccT}  \log\left(\bM^{(i_0 \ldots i_\ccT)}\right) \bM^{(i_0 \ldots i_\ccT)} - \epsilon  \sum_{i_0, \ldots, i_\ccT}  \log\left( \gamma \right) \bM^{(i_0 \ldots i_\ccT)} - \epsilon  \sum_{i_0, \ldots, i_\ccT}  \log\left((\exp(-\bC^{(i_0 \ldots i_\ccT)}/\epsilon)\right) \bM^{(i_0 \ldots i_\ccT)} \\
 & =  \epsilon D(\bM) + \text{constant}  - \epsilon  \sum_{i_0, \ldots, i_\ccT} \frac{-\bC^{(i_0 \ldots i_\ccT)}}{\epsilon} \bM^{(i_0 \ldots i_\ccT)} \\
  & =  \epsilon D(\bM) + \text{constant}  + \langle \bC, \bM \rangle.
\end{align*}
Discarding the constants, the discretized version of \eqref{eq:densitycontrol_KL_formulation}, and thus the discretized version of \eqref{eq:densitycontrol}, therefore becomes
\begin{subequations}\label{eq:discr_optimal_control}
    \begin{align}
    \minwrt[\substack{\bM\in \RR_+^{N^{\ccT + 1}}\\ \mu_1,\ldots,\mu_\ccT \in \RR_+^N}] & \; \langle \bC, \bM \rangle + \epsilon D(\bM) + \Delta t \! \sum_{j=1}^{\ccT-1} F_{j}(\mu_j) + G(\mu_\ccT) \label{eq:discr_optimal_control_cost} \\
    \text{subject to } \; & \; P_j(\bM) = \mu_j, \; j=1,2,\ldots, \ccT, \label{eq:discr_optimal_control_const1} \\
    & \; P_0(\bM) = \mu_0. \label{eq:discr_optimal_control_const2}
    \end{align}
\end{subequations}
Here, $\mu_0$ is a discrete approximation of $\rho_0$, and $\mu_j$ is the distribution of agents at time point $j$.
Left to do is to derive the form of the $(\ccT+1)$-mode tensor $\bC$, which in \eqref{eq:discr_optimal_control} can be seen to correspond to a cost of moving agents.
To this end, first note that since $\ccP^0$ is the probability distribution on path space corresponding to a time-homogeneous Markov process, we have that
\[
\bK^{(i_0 \ldots i_\ccT)} = \prod_{j = 0}^{\ccT - 1} K^{(i_j, i_{j+1})}
\]
where $K$ is a $N \times N$ matrix defining the transition probabilities between the discrete states in one time step. This in turn means that
the cost tensor takes the form
\begin{subequations}\label{eq:cost_tree}
    \begin{equation}
    \bC^{(i_0 \ldots i_\ccT)} = \sum_{j = 0}^{\ccT - 1} C^{(i_j, i_{j+1})},
    \end{equation}
where $C$ is a $N \times N$ matrix defining the transition costs between time points in \eqref{eq:discr_optimal_control}.
More precisely, we approximate the elements $C^{(i,k)}$ as the optimal cost of moving mass from discretization point $x_{i}$ to discretization point $x_{k}$ in one time step, given by
    \begin{equation}\label{eq:C_elem}
    C^{(i,k)} \; = \;
    \begin{cases}
    \displaystyle \; \minwrt[{v\in L_2([0,\Delta t])}] & \; \displaystyle \int_{0}^{\Delta t} \frac{1}{2} \| v \|^2 dt \\
    \; \text{subject to } & \; \displaystyle \dot{x} = f(x) + B(x)v \\
    & \; \displaystyle x(0) = x_{i}, \quad x(\Delta t) = x_{k}.
    \end{cases}
    \end{equation}
\end{subequations}%
This is approximation is motivated by the fact that for small time steps (corresponding to a small noise level in a time-rescaled version of the problem) there is a concentration of the probability around the trajectories that are solutions to the corresponding optimal control problem \cite{hijab1984asymptotic} (cf.~\cite[Sec.~5]{dai1991stochastic}, \cite[Sec.~IV]{chen2017optimal}, \cite[Thm.~2]{todorov2008general}).
Intuitively, this makes sense since for small time steps the transitions are approximately Gaussian. However, the ``distance'' in the transition is no longer measured in the Euclidean norm, but instead by the optimal control cost, since a lower control cost means that the system is ``easier'' to steer between the two states and hence it is more likely that the system makes that transition.

The optimal control problem \eqref{eq:C_elem} can typically not be solved analytically, except in the linear-quadratic case. Nevertheless, a numerical solution to the problem suffices, and the computation of the cost function $C$ can be done off-line before solving \eqref{eq:discr_optimal_control}.

Finally, note that the problem \eqref{eq:discr_optimal_control}--\eqref{eq:cost_tree} is a convex graph-structured tensor optimization problem of the form \eqref{eq:omt_multi_graph_convex_v2} on a path-graph.
In order to guarantee that \eqref{eq:discr_optimal_control} has an optimal solution, one must thus guarantee that it has a  feasible solution with finite object function value, i.e., that there is an $\bM \in \RR_+^{N^{\ccT + 1}}$ that fulfills \eqref{eq:discr_optimal_control_const1} and \eqref{eq:discr_optimal_control_const2} and is such that \eqref{eq:discr_optimal_control_cost} is finite (cf. Assumption~\ref{ass:primal_feasibility_and_optimality}).
A sufficient condition for this to hold is that the functions $F_{j}$, for $j=1, \ldots, \ccT-1$, and $G$ are finite on all of $\RR_+^N$, and that the elements \eqref{eq:C_elem} are all finite.
That latter is true if the deterministic counterpart to system \eqref{eq:SDE} is controllable in the (rather strong) sense that for all $x_0, x_1 \in X$ and for all $t > 0$ there exists a control signal in $L_{2}([0,t])$ that transitions the system from the initial state $x(0) = x_0$ to the final state $x(t) = x_1$. Two examples of classes of systems that have this property are controllable linear systems, and systems where $B(x)$ is square and invertible for all $x$.

\begin{remark}
Another solution method for solving problems of the form \eqref{eq:discr_optimal_control}, for agents that follow the dynamics of a first-order integrator, has been presented in \cite{BenCarDiNen19}. The two methods are similar, and the main difference is that the computational method developed in \cite{BenCarDiNen19} is based on a variable elimination technique, in contrast to the belief-propagation-type technique used here; see the discussion just before Theorem~\ref{thm:convergence}.
\end{remark}

\subsection{The multi-species problem}
A multi-species mean field game is an extension of mean field games to a set of heterogeneous agents, and the idea was already presented in the seminal work \cite{huang2006large, lasry2007mean}.
Here, we consider a multi-species potential mean field game which has $L$ different populations,
each of which can be associated with different costs and constraints,
and where each infinitesimal agent of species $\ell$ obeys the dynamics
\begin{equation*}
dx_\ell(t) = f(x_\ell)dt + B(x_\ell)\big(v_\ell dt + \sqrt{\epsilon}dw_\ell\big),
\end{equation*}
with initial condition $x_\ell(0) = x_{\ell, 0} \sim \rho_{\ell, 0}(x)$.
Next, let $\rho_\ell(t, \cdot)$ denote the distribution of species $\ell$ at time point $t$, and 
note that a multi-species potential mean field game can, analogously to the single species game, be formulated as an optimal control problem over densities.
More precisely, the problem of interest here takes the form
\begin{subequations}\label{eq:densitycontrol_multispecies}
    \begin{align}
        \minwrt[\substack{\rho, \rho_\ell, v_\ell}] & \; \int_0^1 \!\!\! \int_X \sum_{\ell = 1}^L \frac{1}{2} \|v_\ell\|^2 \rho_\ell \, dx dt + \! \int_0^1 \!\! \mathcal{F}_t(\rho(t,\cdot)) dt  + \mathcal{G}(\rho(1,\cdot)) \nonumber \\
        & \quad + \sum_{\ell=1}^L \left( \int_0^1 \mathcal{F}_{\ell, t}(\rho_\ell(t, \cdot)) dt + \mathcal{G}_{\ell}(\rho_\ell(1, \cdot)) \right) \label{eq:densitycontrol_multispecies1} \\        
        \text{subject to } & \;  \frac{\partial \rho_\ell}{\partial t} + \nabla\cdot((f(x) + B(x)v_\ell) \rho_\ell ) \nonumber \\
        & \quad - 
\frac{\epsilon}{2} \sum_{i,k=1}^n \frac{\partial^2 (\sigma_{ik}  \rho_\ell)}{\partial x_i \partial x_k}
= 0, \quad \ell = 1, \ldots L, \label{eq:densitycontrol_multispecies2} \\
        & \; \rho_\ell(0,\cdot) = \rho_{\ell, 0}, \quad \rho(t,x) = \sum_{\ell=1}^L \rho_\ell(t,x), \label{eq:densitycontrol_multispecies3}	
    \end{align}
\end{subequations}
where we impose the same assumptions on $\mathcal{F}_{\ell, t}$ and $\mathcal{G}_{\ell}$ as on $\mathcal{F}_t$ and $\mathcal{G}$, respectively.
The functionals $\int_0^1 \mathcal{F}_t(\cdot) dt$ and $\mathcal{G}(\cdot)$ are the cooperative part of the cost, which connects the different species. In particular, for $\mathcal{F}_t \equiv 0$, $\mathcal{G} \equiv 0$, \eqref{eq:densitycontrol_multispecies} reduces to $L$ independent single-species problems.
Moreover, the functionals $\int_0^1 \mathcal{F}_{\ell, t}(\cdot) dt$ and $\mathcal{G}_{\ell}(\cdot)$ are the ones that give rise to the heterogeneity among the species.

\subsection{Numerical algorithm for solving the multi-species problem}\label{sec:num_alg_multispecies}
To derive a numerical algorithm for solving \eqref{eq:densitycontrol_multispecies}, analogously to the single-species problem we first discretize the problem over time and space.
To this end, by adapting the arguments in the previous section, we arrive at the discrete problem
\begin{subequations}\label{eq:multispecies_discrete}
    \begin{align}
    \minwrt[\substack{\bM_\ell, \, \mu_j, \, \mu_{\ell, j}\\ j = 1,\ldots, \ccT \\ \ell = 1, \ldots, L}] & \; \sum_{\ell = 1}^L \big( \langle \bC, \bM_\ell \rangle + \epsilon D(\bM_\ell) \big) + \Delta t \sum_{j=1}^{\ccT-1} F_{j}(\mu_j) + G(\mu_\ccT) \nonumber \\[-12pt]
    &\; + \sum_{\ell = 1}^L \left( \Delta t \sum_{j=1}^{\ccT-1} F_{\ell, j}( \mu_{\ell, j}) + G_{\ell}(\mu_{\ell, \ccT}) \right) \label{eq:multispecies_discrete1}\\
    \text{subject to } & \; P_j(\bM_\ell) = \mu_{\ell, j},\; j=1, \ldots, \ccT, \, \ell = 1, \ldots, L, \label{eq:multispecies_discrete2} \\
    & \; P_0(\bM_\ell) = \mu_{\ell, 0}, \; \ell = 1, \ldots, L, \label{eq:multispecies_discrete3} \\
    & \; \sum_{\ell = 1}^L \mu_{\ell, j} = \mu_j, \; j=0, \ldots, \ccT \label{eq:multispecies_discrete4}
    \end{align}
\end{subequations}
where $\bC$ still has the form \eqref{eq:cost_tree}, and where $\mu_{\ell, 0}$ are discrete approximations of $\rho_{\ell, 0}$.  In particular, note that the second line in the cost \eqref{eq:multispecies_discrete1} is the discretization of the second line in \eqref{eq:densitycontrol_multispecies1}.
Moreover, also note that \eqref{eq:multispecies_discrete} consists of $L$ coupled graph-structured tensor optimization problems, coupled via the constraint \eqref{eq:multispecies_discrete4} and the cost imposed on $\mu_j$, for $j = 1,\ldots, \ccT$, in \eqref{eq:multispecies_discrete1}.

Next, we reformulate \eqref{eq:multispecies_discrete} into one single graph-structured tensor optimization problem (cf.~\cite{haasler2021scalable}).
To this end, let $\bM \in \RR^{L\times N^{\ccT+1}}$ be the $(\ccT+2)$-mode tensor such that $\bM^{(\ell i_{0} \dots i_\ccT)} = (\bM_\ell)^{(i_{0} \dots i_\ccT)}$, i.e., $\bM^{(\ell i_{0} \dots i_\ccT)}$ is the amount of mass of species $\ell$ that moves along the path $x_{i_{0}}, \dots, x_{i_\ccT}$. For this tensor $\bM$, we will use the index $-1$ to denote the ``species index''. This  means that $(P_{-1}(\bM))^{(\ell)} = \sum_{i_0, \ldots, i_\ccT} (\bM_\ell)^{(i_{0} \dots i_\ccT)}$, for $\ell = 1, \ldots, L$, and hence the elements of the additional marginal $\mu_{-1} \in \RR_+^L$ are the total mass of the densities of the different species. Moreover, this means that 
$P_j(\bM)$ is the total distribution $\mu_j$ at time $j\Delta t$, as defined by \eqref{eq:multispecies_discrete4}, while the bimarginal projection $P_{-1, j}(\bM)$ gives the $L \times N$ matrix $[\mu_{1,j}, \ldots, \mu_{L,j}]^T$.
By introducing
\begin{align*}
& \SpeciesMtx_{-1,0}=[\SpeciesMtxElem_{1,0},\ldots, \SpeciesMtxElem_{L,0}]^T\in \RR_+^{L\times N},
\end{align*}
the constraint
\eqref{eq:multispecies_discrete3} can be imposed by requiring that $P_{-1, 0}(\bM) = \SpeciesMtx_{-1,0}$. 
Next, by defining the functions $\mathscr{F}_j^{L} : \RR^{L \times N} \to \RR$ as
\[
\mathscr{F}_j^{L}(R_{-1, j}) = \sum_{\ell = 1}^L \Delta t F_{\ell, j} ( \mu_{\ell, j}) \quad j = 1, \ldots, \ccT,
\]
and similarly for $\mathscr{G}^{L}$,
the last term in the cost \eqref{eq:multispecies_discrete1} can be written as functionals applied to the bimarginal projections.
Finally, by noting that $\sum_{\ell = 1}^L D(\bM_\ell) = D(\bM)$, we can write the problem as
\begin{subequations}\label{eq:multispecies_discrete_rewritten}
    \begin{align}
    \minwrt[\substack{\bM, \, \mu_j, \, R_{-1, j}\\ j=1,\ldots, \ccT}] & \; \langle \newC, \bM \rangle + \epsilon D(\bM) + \Delta t \sum_{j=1}^{\ccT-1} F_{j}(\mu_j) + G(\mu_\ccT) \nonumber \\[-10pt]
    & \; + \sum_{j=1}^{\ccT-1} \mathscr{F}_j^{L}( R_{-1, j} ) + \mathscr{G}^{L}(R_{-1, \ccT}) \label{eq:multispecies_discrete_rewritten1}\\
    \text{subject to } & \; P_j(\bM) = \mu_j,\quad j=1,\ldots, \ccT, \label{eq:multispecies_discrete_rewritten2} \\
    & \; P_{-1,j}(\bM) = R_{-1, j}, \quad j=1,\ldots, \ccT, \label{eq:multispecies_discrete_rewritten4} \\
    & \; P_{-1,0}(\bM) = \SpeciesMtx_{-1, 0} \label{eq:multispecies_discrete_rewritten3}
    \end{align}
where
\begin{equation}\label{eq:multispecies_discrete_rewritten_cost_tree}
\newC^{(\ell i_0 \ldots i_\ccT)} = \sum_{j = 0}^{\ccT - 1} C^{(i_{j}, i_{j+1})}.
\end{equation}
\end{subequations}

The problem \eqref{eq:multispecies_discrete_rewritten} is readily seen to be a graph-structured tensor optimization problem of the form \eqref{eq:omt_multi_graph_convex_v2}, and can hence be solved using Algorithm~\ref{alg:generalized_sinkhorn}.
In particular, the iterates of the transport plan produced by Algorithm~\ref{alg:generalized_sinkhorn} are of the form $\bM^k = \bK \odot \bU^k$,
where $\bK = \exp(-\newC/\epsilon)$ and where
\begin{equation}\label{eq:U_multi_species_new}
\bU^{(\ell i_{0} \dots i_\ccT)} = U_{-1,0}^{(\ell, i_{0})} \prod_{j = 1}^\ccT U_{-1,j}^{(\ell, i_{j})}  \prod_{j = 1}^\ccT u_j^{(i_j)}.
\end{equation}
The underlying graph-structure is illustrated in Figure~\ref{fig:multispecies_graph_nonfixed}, and by adapting the arguments in \cite{haasler2021scalable}, marginal and bimarginal projections needed in the inclusion problems \eqref{eq:sinkhorn_inclusion_u} can be computed efficiently as follows.
\begin{theorem}\label{thm:proj}
	Let $\bK=\exp(-\newC/\epsilon)$, with $\newC$ defined as in \eqref{eq:multispecies_discrete_rewritten_cost_tree} and $\epsilon>0$, and let $\bU$ be as in \eqref{eq:U_multi_species_new}. 
	Define $K=\exp(-C/\epsilon)$, and let
	\begin{equation*}
	\hat \Psi_j = \begin{cases}
	 U_{-1,0}  K , & j=1,\\
	\left( \hat \Psi_{j-1} \odot U_{-1, j-1} \right) \diag(u_{j-1}) K  , & j= 2,\dots,\ccT,
	\end{cases} 
	\end{equation*}
	and
	\begin{equation*}
	\Psi_j = \begin{cases}
	U_{-1, \ccT} \, \diag(u_\ccT) K^T, & j=\ccT-1,\\
	\left( \Psi_{j+1} \odot U_{-1, j+1} \right) \diag(u_{j+1}) K^T , & j=0,\dots, \ccT-2.
	\end{cases}
	\end{equation*}
	Then we have the following expressions for projections of the tensor $\bK \odot \bU$
	\begin{align*}
	& P_{-1,0} (\bK \odot \bU) = U_{-1,0} \odot  \Psi_0, \\
	& P_{-1,j} (\bK \odot \bU) = \hat{\Psi}_{j} \odot  \Psi_j \odot U_{-1, j}\diag(u_j),  \\
	& P_{-1,\ccT} (\bK \odot \bU) = U_{-1, \ccT}\diag(u_\ccT) \odot \hat{\Psi}_{\ccT}, \\
	& P_\ccT(\bK \odot \bU) =  u_\ccT \odot  \left(   \hat \Psi_{\ccT} \odot U_{-1, \ccT}  \right)^T \ett,\\	
	& P_j(\bK \odot \bU) = u_j \odot  \left(   \hat \Psi_{j} \odot   \Psi_j \odot U_{-1,j}  \right)^T \ett,
	\end{align*}
	for $j=1,\dots,\ccT-1$.
\end{theorem}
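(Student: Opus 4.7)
The plan is to recognize $\hat{\Psi}_j$ and $\Psi_j$ as forward and backward messages in a belief-propagation-type scheme on the graph underlying the tensor $\bK \odot \bU$. This graph consists of a path $0 - 1 - \cdots - \ccT$ (coming from the factors $K_{i_j, i_{j+1}}$ in $\bK$), together with a ``species'' node $-1$ connected to every path node $j$ via the bimarginal factor $U_{-1, j}$, while each node $j \geq 1$ additionally carries the local factor $u_j$. Computing projections then amounts to eliminating modes one at a time along the path, and the two families of matrices in the statement are exactly the partial elimination results from the two ends.

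The first step is to establish by induction on $j$ the closed-form expressions
\begin{equation*}
(\hat{\Psi}_j)_{\ell, i_j} = \sum_{i_0, \ldots, i_{j-1}} \Big( \prod_{k=0}^{j-1} K_{i_k, i_{k+1}} \Big) U_{-1,0}^{(\ell, i_0)} \prod_{k=1}^{j-1} U_{-1,k}^{(\ell, i_k)} u_k^{(i_k)},
\end{equation*}
\begin{equation*}
(\Psi_j)_{\ell, i_j} = \sum_{i_{j+1}, \ldots, i_{\ccT}} \Big( \prod_{k=j}^{\ccT-1} K_{i_k, i_{k+1}} \Big) \prod_{k=j+1}^{\ccT} U_{-1,k}^{(\ell, i_k)} u_k^{(i_k)}.
\end{equation*}
The base cases $\hat{\Psi}_1 = U_{-1,0} K$ and $\Psi_{\ccT-1} = U_{-1,\ccT}\diag(u_\ccT) K^T$ follow from a direct entrywise expansion of the matrix products. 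The inductive steps rewrite the recursions $\hat{\Psi}_j = (\hat{\Psi}_{j-1} \odot U_{-1,j-1}) \diag(u_{j-1}) K$ and $\Psi_j = (\Psi_{j+1} \odot U_{-1,j+1}) \diag(u_{j+1}) K^T$ entrywise, absorbing one additional summation (over $i_{j-1}$ or $i_{j+1}$) into the sums provided by the inductive hypothesis.

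With the closed-form expressions in hand, each projection is obtained by substituting the definition of $(\bK \odot \bU)_{\ell i_0 \ldots i_\ccT}$ into the relevant marginal sum and splitting the sum into three pieces: a forward sum over $i_0, \ldots, i_{j-1}$ matching $\hat{\Psi}_j$, a backward sum over $i_{j+1}, \ldots, i_{\ccT}$ matching $\Psi_j$, and the local factors $U_{-1,j}^{(\ell, i_j)}$ and/or $u_j^{(i_j)}$ associated with the retained indices. For instance, for $P_{-1,j}(\bK \odot \bU)$ with $1 \leq j \leq \ccT-1$, this directly yields $\hat{\Psi}_j \odot \Psi_j \odot U_{-1,j} \diag(u_j)$; the time-marginals $P_j(\bK \odot \bU)$ are then recovered by additionally summing over the species index $\ell$, which in matrix notation is multiplication from the right by $\ett$. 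The endpoint cases $P_{-1,0}$ and $P_{-1,\ccT}$ are handled in the same way, with one of the two messages absent.

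The main obstacle is purely bookkeeping: keeping track of the fact that $u_j$ enters only for $j \geq 1$, correctly handling the endpoint summation ranges, and ensuring that the Hadamard products and $\diag(\cdot)$ factors in the matrix expressions match the corresponding entrywise identities (in particular, the transpose in $K^T$ arising from the direction in which the backward message propagates). Once the closed forms for $\hat{\Psi}_j$ and $\Psi_j$ are set up, every projection identity reduces to an elementary factorisation of the summand, and the derivation is essentially an adaptation of the belief-propagation computations used in \cite{haasler2021scalable, haasler2020multi}.
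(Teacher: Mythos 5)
Your proposal is correct and follows essentially the same route as the paper's proof: both write out the closed-form sums for $\hat\Psi_j$ and $\Psi_j$, identify them with the recursions (your induction is exactly the paper's ``direct calculation''), factor the marginal sums into forward message, backward message, and local factors $U_{-1,j}$, $u_j$, and recover $P_j$ by summing the bimarginal over the species index $\ell$. No gaps to report.
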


\begin{proof}
See Appendix~\ref{app:proofs}.
\end{proof}

Finally, using Theorem~\ref{thm:proj} and specializing Algorithm~\ref{alg:generalized_sinkhorn} to solving the particular problem \eqref{eq:multispecies_discrete_rewritten}, an algorithm for solving discretized multi-species potential mean field games
is given in Algorithm~\ref{alg:multi_species}.
\begin{remark}\label{rem:meanfieldgames}
The algorithms in \cite{ringh2021efficient} are special instances of Algorithm~\ref{alg:multi_species}. In particular, if $\mathscr{F}_j^{L} (\cdot) = \langle C_j, \cdot \rangle$ for some $C_j\in \RR^{L \times N}$, then $(\mathscr{F}_j^{L})^*(\cdot) = \indFun_{\{ C_j \}}(\cdot)$.
Hence, $U_{-1, j}$ must equal $K_j := \exp(-C_j/\epsilon)$. Similarly, if $\mathscr{G}^{L}(\cdot) = \langle C_\ccT, \cdot \rangle$, we get that $U_{-j, \ccT}$ must be equal to $K_\ccT$, from which we recover \cite[Alg.~1]{ringh2021efficient}. On the other hand, if $\mathscr{G}^{L}(\cdot) = \indFun_{\{\SpeciesMtx_{-1, \ccT} \}}(\cdot)$ for some given $\SpeciesMtx_{-1, \ccT}$, then the marginal $\mu_\ccT$ is also known and any cost associated with it is a constant and can hence be removed. Moreover, $(\mathscr{G}^{L})^*(\cdot) = \langle \SpeciesMtx_{-1, \ccT}, \cdot \rangle$, from which we recover \cite[Alg.~2]{ringh2021efficient}.
\end{remark}

\begin{figure}
  \begin{center}
	\begin{tikzpicture}[scale=0.95, every node/.style={scale=0.95}]
	\footnotesize
	
	\tikzstyle{main}=[circle, minimum size = 9mm, thick, draw =black!80, node distance = 10mm]
	\tikzstyle{obs}=[circle, minimum size = 9mm, thick, draw =black!80, node distance = 10 mm and 6mm ]
	
	\node[main,fill=black!10] (mu0) {$\mu_{-1}$};
	
	\node[] (phi1c) [below=of mu0] {}; 
	\node[] (mu3) [left=of phi1c] {};  
	\node[obs] (mu2) [left=of mu3] {$\mu_{1}$};  
	\node[obs,fill=black!10] (mu1) [left=of mu2] {$\mu_{0}$};
	\node[] (muS2) [right=of phi1c] {};	
	\node[obs] (muS1) [right=of muS2] {$\!\mu_{\!\ccT\!-\!1\!}\!$};
	\node[obs] (muS) [right=of muS1] {$\mu_{\ccT}$};
	
	\draw[->, -latex, thick] (mu0) -- node[above left] {$P_{-1,0}(\bM)=\SpeciesMtx_{-1,0}$} (mu1);
	\draw[->, -latex, thick] (mu0) -- node[below right] {$R_{-1, 1}$} (mu2);
	\draw[->, -latex, thick] (mu0) -- node[below left] {\ $R_{-1,\ccT-1}$} (muS1);
	\draw[->, -latex, thick] (mu0) -- node[above right] {\ $R_{-1,\ccT}$} (muS);
	
	\draw[->, -latex, thick] (mu1) -- node[below] {$C$} (mu2);
    \draw[->, -latex, thick] (mu2) -- node[below] {$C$} (mu3);
	\draw[->, -latex, thick] (muS1) -- node[below] {$C$} (muS);
	\draw[->, -latex, thick] (muS2) -- node[below] {$C$} (muS1);
	
	\draw[loosely dotted, very thick] (mu3) -- (muS2); 
	
	\end{tikzpicture}
	\caption{Illustration of the graph $\ccG$ for the multi-species density optimal control problem. Grey circles correspond to known densities, and white circles correspond to densities which are to be optimized over.}	
	\label{fig:multispecies_graph_nonfixed}
  \end{center}
\end{figure}
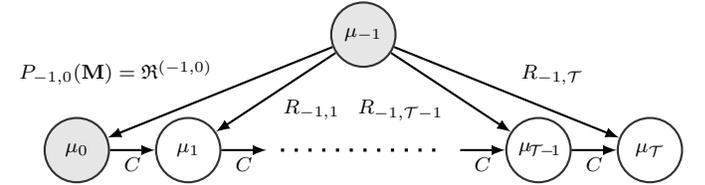

\begin{algorithm}[tb]
	\begin{algorithmic}[1]
	    \STATE Given: Initial guess $u_1,\dots, u_{\ccT}$, $U_{-1,0}, \ldots, U_{-1,\ccT}$
		\WHILE{Not converged}	
		\STATE $ \Psi_{\ccT-1} \leftarrow U_{-1, \ccT} \, \diag(u_\ccT) K^T$ 
		\FOR{ $j=\ccT-2,\dots, 0$}
		\STATE  $\Psi_{j} \leftarrow ( \Psi_{j+1} \odot U_{-1, j+1} ) \diag(u_{j+1}) K^T $
		\ENDFOR 
		\STATE $U_{-1,0} \leftarrow \SpeciesMtx_{-1,0}\oslash \Psi_0$
		\STATE $\hat \Psi_1 \leftarrow U_{-1,0} K$ 
		\FOR{ $j=1,\dots,\ccT-1$}
		\STATE $W_{-1, j} \leftarrow (\hat \Psi_{j} \odot   \Psi_j)\diag(u_j)$, and update $U_{-1,j}$ by solving \eqref{eq:sinkhorn_inclusion_bimarginal_U}
		\STATE $w_{j} \leftarrow (\hat \Psi_{j} \odot   \Psi_j \odot U_{-1,j})^T \ett$, and update $u_j$ by solving \eqref{eq:sinkhorn_inclusion_marginal_u}
		\STATE $\hat \Psi_{j+1} \leftarrow ( \hat \Psi_{j} \odot U_{-1,j} ) \diag(u_{j}) K $ 
		\ENDFOR
		\STATE $W_{-1, \ccT} \leftarrow \hat \Psi_{\ccT} \diag(u_\ccT)$, and update $U_{-1,\ccT}$  by solving \eqref{eq:sinkhorn_inclusion_bimarginal_U}
		\STATE $w_{\ccT} \leftarrow  (\hat \Psi_{\ccT} \odot U_{-1, \ccT})^T \ett$, and update $u_\ccT$ by solving \eqref{eq:sinkhorn_inclusion_marginal_u}
		\ENDWHILE
	  \RETURN $u_1,\dots,u_{\ccT}$, $U_{-1,0}, \ldots, U_{-1,\ccT}$
	\end{algorithmic}
	\caption{Method for solving the multi-species potential mean field game \eqref{eq:multispecies_discrete_rewritten}.}
	\label{alg:multi_species}
\end{algorithm}

\subsection{Numerical example}\label{sec:2D_example}
In this section we demonstrate Algorithm~\ref{alg:multi_species} on a two-dimensional numerical example with $L = 4$ different species. To this end, we consider the state space $[0,3] \times [0,3]$ and uniformly discretize it into $100 \times 100$ grid points; the latter are denoted $x_{i,k}$ for $i,k = 1, \ldots, 100$. No points are placed on the boundary of the state space, which means that the cell size is $\Delta x = 0.03^2$. Moreover, time is discretized into $\ccT+1 = 40$ time steps, i.e., with a discretization size $\Delta t = 1/39$ and with time index $j = 0, \ldots, 39$. The dynamics of the agents is taken to be
$f(x) \equiv 0$ and $B(x) = I$.
This means that the cost matrix, with elements \eqref{eq:C_elem},
is time-independent and given by $C = [\|x_{i_1, k_1} - x_{i_2, k_2}\|^2]_{i_1, i_2, k_1, k_2 = 1}^{100}$. This corresponds to the squared Wasserstein-2 distance on the discrete grid.

For $\epsilon = 10^{-2}$, we consider the discrete problem
\begin{subequations}\label{eq:2d_example}
  \begin{align}  
    \minwrt[\substack{\bM_\ell \in \RR_+^{(100^2)^{40}},\\ \mu_{\ell, j} \in \RR_+^{100^2}\\ j=1,\ldots, 39,\; \ell=1,2,3,4}]
        &  \quad \sum_{\ell = 1}^4 \Big( \langle \bC, \bM_\ell \rangle + \epsilon D(\bM_\ell) \Big) + \sum_{j=1}^{39}\langle c_{3}, \mu_{3, j} \rangle \nonumber \\[-12pt]
    & \quad + 0.1\sum_{j = 1}^{39} \| \mu_{4, j} - \tilde{\nu} \|_2^2 + 3 \| \mu_{19} - \tilde{\mu}_1 \|^2_2 + 3 \| \mu_{39} - \tilde{\mu}_2 \|^2_2 \label{eq:2d_example_cost} \\
    \text{subject to } \quad \; & \quad P_j(\bM_\ell) = \mu_{\ell, j}, \quad j=0, \ldots, 39, \; \ell = 1, 2, 3, 4, \\
    & \quad \sum_{\ell = 1}^4\mu_{\ell, j} = \mu_j, \quad j= 0,\ldots, 39, \\
    & \quad \mu_j \leq \kappa_j, \quad j= 1,\ldots, 39, \label{eq:2d_example_const3}\\
    & \quad \mu_{1, j} \leq \tilde{\kappa}, \quad j= 1,\ldots, 39. \label{eq:2d_example_const4}
  \end{align}
\end{subequations}
Here, $\tilde{\mu}_1$ and $\tilde{\mu}_2$ are the two distributions given in Figure~\ref{subfig:2d_ex_target_dists}.
Moreover, the  linear cost $c_{3}$, associated with species $3$, and the target distribution $\tilde{\nu}$, associated with species $4$, are both given in Figure~\ref{subfig:2d_ex_linear_costs}.%
\footnote{Note that $\tilde{\mu}_2$ and $\tilde{\nu}$ are uniform distributions. The former has the same total mass as the total distribution $\mu_0$, and the latter the same as $\mu_{4, 0}$.}
Finally, for the capacity constraint \eqref{eq:2d_example_const3}, $\kappa_j$ is illustrated in Figure~\ref{subfig:2d_ex_constraints}, while for the capacity constraint \eqref{eq:2d_example_const4}, $\tilde{\kappa}$ is zero in the lower half of the domain and infinite for the upper half.

The graph-structured tensor optimization reformulation of \eqref{eq:2d_example} was solved using Algorithm~\ref{alg:multi_species}. The latter is adapted as in Section~\ref{sec:multiple_costs} to handle both the costs on the total marginals in \eqref{eq:2d_example_cost} and the inequality constraints in \eqref{eq:2d_example_const3}; details on the Fenchel conjugates of the functions involved can be found in Appendix~\ref{app:fenchel}.
Results are shown in Figure~\ref{fig:2d_ex_species}, where the initial distributions $\mu_{\ell, 0}$ for the different agents can be seen in the left-most column (showing time point $j = 0$).

\begin{figure}[tbh]
  \begin{center}
    \begin{subfigure}[t]{.32\textwidth}
      \centering
      \includegraphics[trim=1.3cm 0.3cm 1cm 0cm, clip=true,width=\textwidth]{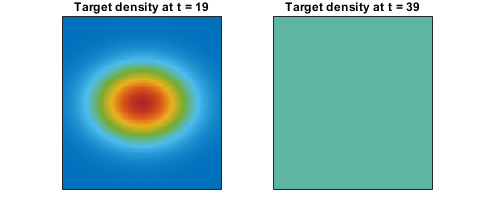}
      \subcaption{Target densities for total density.}
      \label{subfig:2d_ex_target_dists}
    \end{subfigure}
    \hfill
    \begin{subfigure}[t]{.32\textwidth}
      \centering
      \includegraphics[trim=2cm 0.3cm 1.3cm 0cm, clip=true, width=\textwidth]{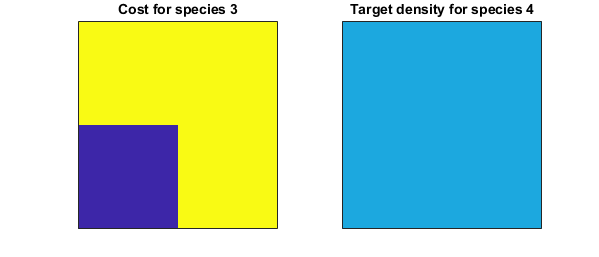}
      \subcaption{Species-dependent cost and constraint}
      \label{subfig:2d_ex_linear_costs}
    \end{subfigure}
    \hfill
    \begin{subfigure}[t]{.32\textwidth}
      \centering
      \includegraphics[trim=6.2cm 1.65cm 4.5cm 1.5cm, clip=true, width=\textwidth]{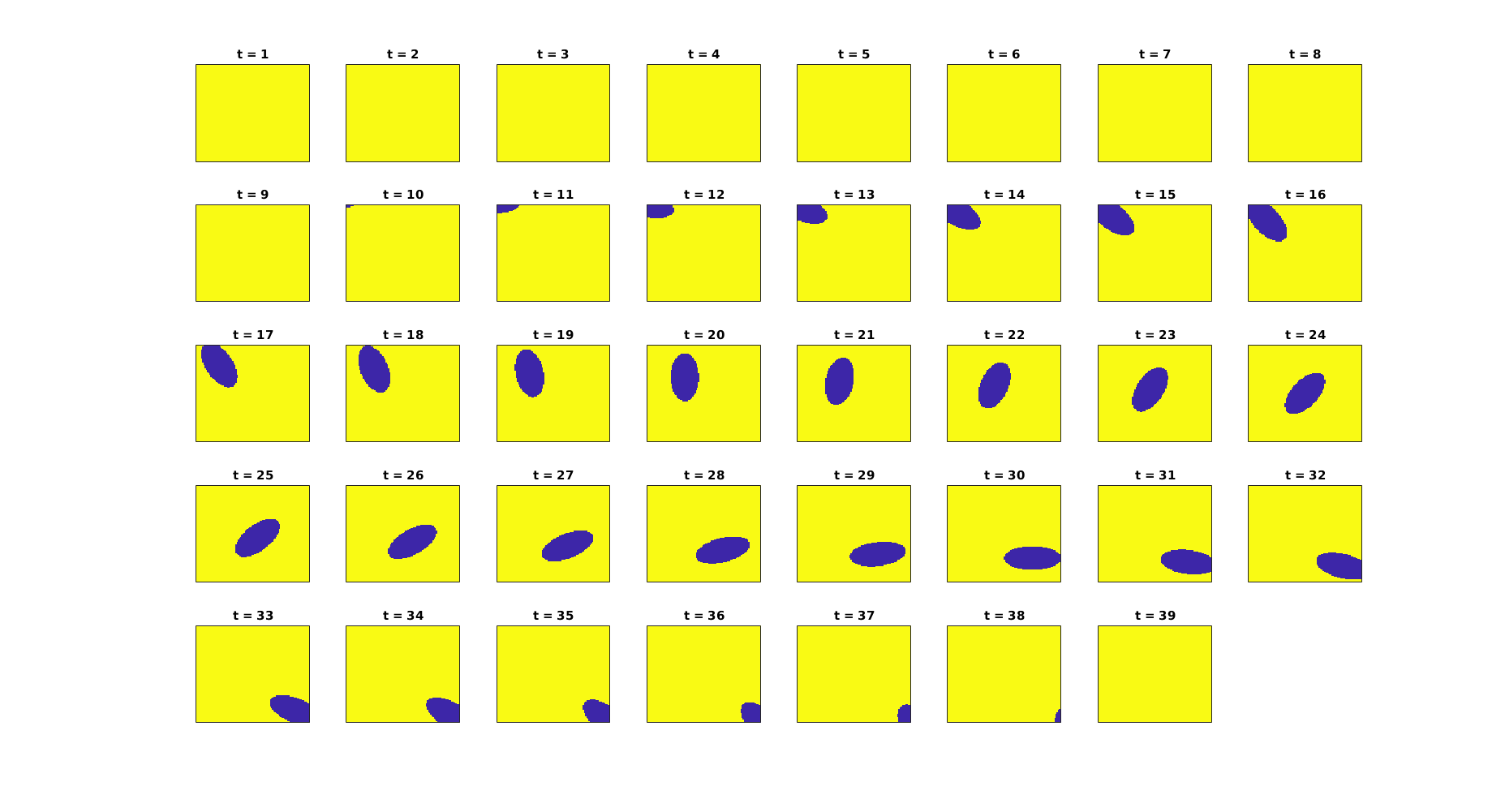}
      \subcaption{Illustration of the capacity constraints.}
      \label{subfig:2d_ex_constraints}
    \end{subfigure}
    \begin{subfigure}[t]{\textwidth}
      \centering
      \includegraphics[trim=1.4cm 0.3cm 1.6cm 0cm, clip=true, width=\textwidth]{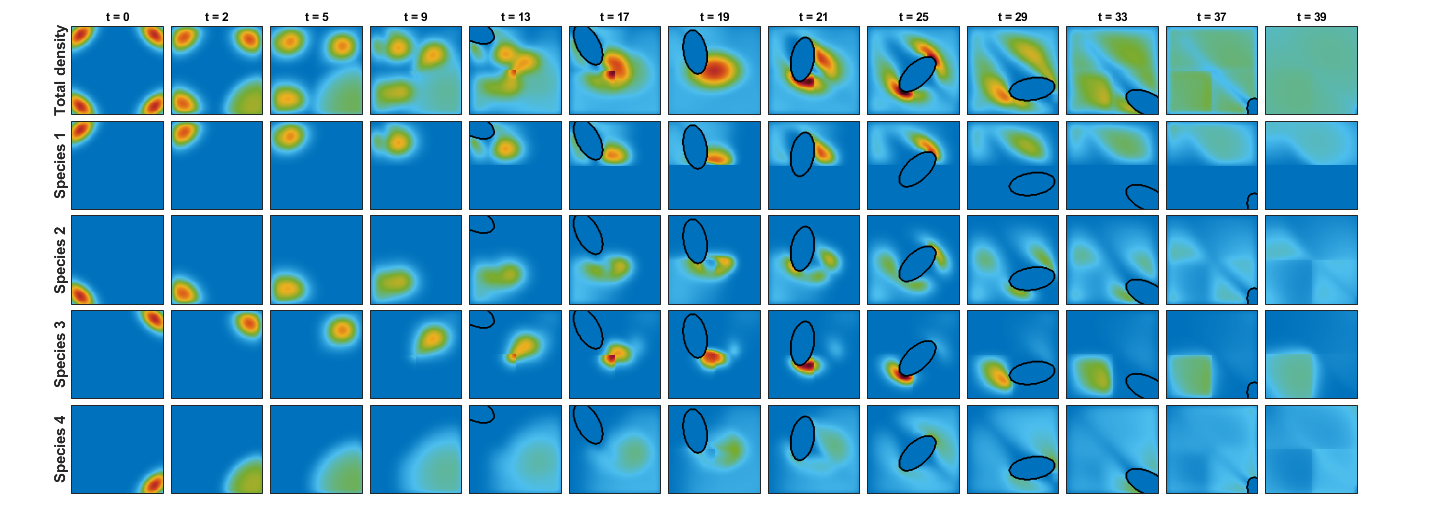}
      \caption{Optimal solution.}
      \label{fig:2d_ex_species}
    \end{subfigure}
    \caption{Figures describing the setup in the numerical example in Section~\ref{sec:2D_example}. (a)~Target densities $\tilde{\mu}_1$ (left) and $\tilde{\mu}_2$ (right) for the total density at time points $j=19$ and $j=39$, respectively. (b)~Illustration of species-dependent cost and constraint: left plot shows the linear cost $c_3$ for species $3$, where blue means cost 0 and yellow means a cost of $390\Delta x\Delta t$. The right plots shows the target distributions $\tilde{\nu}$ for species~$4$. (c)~The capacity constraint $\kappa_j$ at the different time points $j$: blue means zero capacity (obstacle) while yellow means infinite capacity. (d)~The optimal solution, illustrated as time evolution of total density and densities of the individual species.}
    \label{fig:2d_ex_rest}
  \end{center}
  
\end{figure}

\section{Conclusions}\label{sec:conclusions}

In this paper we have seen that graph structured tensor optimization problems naturally appear in several areas in systems and control. We have developed numerical algorithms for these problems based on dual coordinate ascent that utilize the fact that the dual problems decouple according to the graph structure. We also showed that under mild conditions these algorithms are globally convergent, and in certain cases the convergence is R-linear.
This framework can also be used to solve convex multi-commodity dynamic network flow problems akin to the ones studied in \cite{haasler2021scalable}. Moreover,
we believe that these methods are useful for addressing many other types of problems, e.g., in flow problems where the nodes or edges also have dynamics (cf.~\cite{como2017resilient}).
Moreover, we also believe that these methods can be extended to handle, e.g., multi-species potential mean field games where each species also has different dynamics.

\appendix

\section{Deferred proofs}\label{app:proofs}

\begin{proof}[Proof of Lemma~\ref{lem:primal_optimality}]
By Assumption~\ref{ass:primal_feasibility_and_optimality}, there is a feasible point to \eqref{eq:omt_multi_graph_convex_v2} with finite objective function value, and since problems \eqref{eq:omt_multi_graph_convex_v2} and \eqref{eq:omt_multi_graph_convex} are equivalent, this means 
that the objective function in \eqref{eq:omt_multi_graph_convex} is proper.
To show that the minimum for the latter is attained, note that $g_{t}$, $t\in \ccV$, and $f_{t_1,t_2}$, $(t_1,t_2)\in \ccE$, are all proper, convex, and lower-semicontinuous, and hence they all have a continuous affine minorant \cite[Thm.~9.20]{bauschke2017convex}.
However, since the entropy term $\epsilon D(\bM)$ is radially unbounded and grows faster towards $\infty$ than linearly, we therefore have that the objective function in \eqref{eq:omt_multi_graph_convex} is radially unbounded. Since the entire objective function is also proper, convex, and lower-semicontinuous, the minimum is attained \cite[Thm.~27.2]{rockafellar1970convex}, and it is unique since $D(\bM)$ (and hence the entire objective function in \eqref{eq:omt_multi_graph_convex}) is strictly convex.
\end{proof}

\begin{lemma}\label{lem:nonempty_realative_interior}
Let $f : \RR^n \to \RRext$ be proper, convex, and lower-semicontinuous, then $\ri(\dom(f^*)) \neq \emptyset$.
\end{lemma}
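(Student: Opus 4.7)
The plan is to reduce the claim to two standard facts: that the Fenchel conjugate of a proper, convex, lower-semicontinuous function is itself proper, and that every nonempty convex subset of $\RR^n$ has nonempty relative interior.

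First I would argue that $f^*$ is proper. Since $f$ is proper, convex, and lower-semicontinuous, it admits a continuous affine minorant \cite[Thm.~9.20]{bauschke2017convex}: there exist $a \in \RR^n$ and $b \in \RR$ with $f(x) \geq \langle a, x\rangle - b$ for every $x \in \RR^n$. Taking the supremum in the definition of $f^*$ then yields $f^*(a) \leq b$, so $a \in \dom(f^*)$ and in particular $\dom(f^*) \neq \emptyset$. Moreover, picking any $x_0 \in \dom(f)$ (which exists since $f$ is proper), we have $f^*(y) \geq \langle y, x_0\rangle - f(x_0) > -\infty$ for all $y$, so $f^*$ never takes the value $-\infty$. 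Hence $f^*$ is proper. (Alternatively, this follows immediately from $f^{**} = f$ being proper.)

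Next, $\dom(f^*)$ is the effective domain of the convex function $f^*$, hence is a convex subset of $\RR^n$, and by the previous paragraph it is nonempty. Applying the standard result that every nonempty convex set in $\RR^n$ has nonempty relative interior \cite[Thm.~6.2]{rockafellar1970convex} gives $\ri(\dom(f^*)) \neq \emptyset$, completing the proof.

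I do not anticipate a real obstacle here; both ingredients are standard. The only subtlety is making sure $f^*$ is genuinely proper (not identically $+\infty$ and never $-\infty$), which is where properness and lower-semicontinuity of $f$ are used through the existence of an affine minorant.
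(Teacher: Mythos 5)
Your proposal is correct and follows essentially the same route as the paper: establish that $f^*$ is proper (hence $\dom(f^*)$ is a nonempty convex subset of $\RR^n$) and then invoke the standard fact that a nonempty convex set has nonempty relative interior. The only cosmetic difference is that the paper obtains properness of $f^*$ by citing the Fenchel--Moreau corollary (properness, convexity, and lower-semicontinuity of $f^*$), whereas you verify it directly via a continuous affine minorant of $f$; both justifications are valid.
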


\begin{proof}
Since the function $f$ is proper, convex, and lower-semicontinuous, so is the function $f^*$ \cite[Cor.~13.38]{bauschke2017convex}. $\dom(f^*)$ is therefore nonempty, and by \cite[Prop.~8.2]{bauschke2017convex} it is convex. Using \cite[Fact~6.14(i)]{bauschke2017convex}, the result follows.
\end{proof}

\begin{lemma}\label{lem:strong_duality}
There is no duality gap between \eqref{eq:omt_multi_graph_convex_v2} and
\eqref{eq:dual}.
\end{lemma}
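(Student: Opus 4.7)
The plan is to establish strong duality by working \emph{from the dual side}: rather than verifying a Slater-type condition on the primal \eqref{eq:omt_multi_graph_convex_v2} (which would require interior information about the marginals and bimarginals that this lemma does not assume), I would reformulate \eqref{eq:dual} as a convex minimization, verify a Slater-type condition on \textit{it} using Lemma~\ref{lem:nonempty_realative_interior}, and then identify its Lagrangian bidual with \eqref{eq:omt_multi_graph_convex_v2}. Weak duality already gives primal minimum $\ge$ dual supremum, so this one-sided identification is enough.

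Concretely, after negating the objective in \eqref{eq:dual}, I would introduce auxiliary variables $y_t \in \RR^N$ for $t \in \ccV$ and $Y_{t_1,t_2}\in \RR^{N\times N}$ for $(t_1,t_2)\in \ccE$ with linear constraints $y_t + \lambda_t = 0$ and $Y_{t_1,t_2} + \Lambda_{t_1,t_2} = 0$, thereby decoupling the Fenchel-conjugate terms from the coupling term $\epsilon \langle \bK, \bU(\lambda,\Lambda)\rangle$. Since $\bK \ge 0$ has finite entries, $\epsilon \langle \bK, \bU\rangle$ is finite for every $(\lambda,\Lambda)$, so the effective domain of the $(\lambda,\Lambda)$-component is the whole space. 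Lemma~\ref{lem:nonempty_realative_interior} (applied to each $g^{(t)}$ and $f^{(t_1,t_2)}$, which are proper, convex, and lsc by Assumption~\ref{ass:primal_feasibility_and_optimality}) yields $y_t \in \ri(\dom((g^{(t)})^*))$ and $Y_{t_1,t_2}\in \ri(\dom((f^{(t_1,t_2)})^*))$; setting $\lambda_t = -y_t$ and $\Lambda_{t_1,t_2}=-Y_{t_1,t_2}$ then produces a relative-interior feasible point. With the equality constraints being polyhedral, standard strong-duality results (e.g., \cite[Cor.~28.2.2]{rockafellar1970convex}) give that the reformulated dual and its Lagrangian dual share a common optimal value.

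It remains to compute the Lagrangian dual and recognize it as \eqref{eq:omt_multi_graph_convex_v2}. Attaching multipliers $\mu_t$ and $R_{t_1,t_2}$ to the equality constraints (with sign convention $-\mu_t^T(y_t+\lambda_t)$ etc.), the partial infimum over $y_t$ gives $-(g^{(t)})^{**}(\mu_t) = -g^{(t)}(\mu_t)$ by the Fenchel-Moreau theorem, and analogously for $Y_{t_1,t_2}$. For the partial infimum over $(\lambda,\Lambda)$, setting the gradient of $\epsilon\langle\bK,\bU(\lambda,\Lambda)\rangle - \sum_t \mu_t^T\lambda_t - \sum_{(t_1,t_2)} \langle R_{t_1,t_2},\Lambda_{t_1,t_2}\rangle$ to zero forces $P_t(\bK\odot\bU)=\mu_t$ and $P_{t_1,t_2}(\bK\odot\bU)=R_{t_1,t_2}$, and substituting $\bM = \bK\odot\bU$ using the identity $\epsilon\log\bM_{i_1\ldots i_\ccT} = -\bC_{i_1\ldots i_\ccT} + \sum_t \lambda_t^{(i_t)} + \sum_{(t_1,t_2)} \Lambda_{t_1,t_2}^{(i_{t_1},i_{t_2})}$ returns $-\langle\bC,\bM\rangle - \epsilon D(\bM)$ up to the additive constant $\epsilon N^\ccT$ already discarded in \eqref{eq:dual}. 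The bidual objective is therefore the negative of the primal objective in \eqref{eq:omt_multi_graph_convex_v2}, with the marginal/bimarginal consistency constraints arising naturally, and strong duality follows.

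The main obstacle I anticipate is the last substitution when $\bM$ is to take zero values, either because some entries of $\bC$ are $+\infty$ or because $\mu_t, R_{t_1,t_2}$ lie on the boundary of their effective domains; in those cases, the minimizing $(\lambda,\Lambda)$ is only attained as a limit with components diverging to $-\infty$. I would handle this by a closure/continuity argument: the image of $(\lambda,\Lambda)\mapsto \bK\odot\bU$ has closure equal to the full primal feasible set at the prescribed marginals, and lower-semicontinuity of the primal objective (already used in the proof of Lemma~\ref{lem:primal_optimality}) identifies the infimum of the bidual with the minimum in \eqref{eq:omt_multi_graph_convex_v2}. Apart from this boundary accounting, the computation is essentially the derivation of \eqref{eq:dual} in Theorem~\ref{thm:lagrangian_dual} run in reverse.
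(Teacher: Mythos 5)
Your proposal is correct in substance and follows the same overall strategy as the paper's proof -- dualize the dual, obtain a Slater-type point from Lemma~\ref{lem:nonempty_realative_interior}, and use Fenchel--Moreau to recover $g^{(t)}$ and $f^{(t_1,t_2)}$ in the bidual -- but the reformulation you dualize is genuinely different, and this shifts where the technical work lands. The paper lifts $\bU$ as an explicit variable subject to $\log(\bU_{i_1\ldots i_\ccT})=\frac{1}{\epsilon}\bigl(\sum_t\lambda_t^{(i_t)}+\sum_{(t_1,t_2)}\Lambda_{t_1,t_2}^{(i_{t_1},i_{t_2})}\bigr)$, relaxes this non-affine equality to an inequality using monotonicity of the objective in $\bU$ (since $\bK\geq 0$), and attaches a multiplier tensor $\mathbf{Q}\geq 0$ to it; the inner supremum over $\bU$ is then attained elementwise at $\bU=\mathbf{Q}\oslash(\epsilon\bK)$, the entropy reappears directly, and the case $\bK_{i_1\ldots i_\ccT}=0$ (i.e.\ $\bC_{i_1\ldots i_\ccT}=\infty$) is disposed of explicitly before the change of variables $\mathbf{Q}=\epsilon\bM$. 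You instead keep $(\lambda,\Lambda)$, split off the conjugate arguments with affine constraints $y_t+\lambda_t=0$, $Y_{t_1,t_2}+\Lambda_{t_1,t_2}=0$, and attach $\mu_t,R_{t_1,t_2}$ to those; the price is that the partial infimum over $(\lambda,\Lambda)$ is the (negative) conjugate of the coupling term $\epsilon\langle\bK,\bU(\lambda,\Lambda)\rangle$, which is not attained in the boundary cases you identify, so your stationarity-plus-substitution computation needs a supplementary argument. Your closure/continuity sketch is stated loosely (the closure of the image of $(\lambda,\Lambda)\mapsto\bK\odot\bU$ is not literally the feasible set at prescribed marginals; what you need is that the infimum equals the constrained minimum of $\langle\bC,\cdot\rangle+\epsilon D(\cdot)$ over tensors with marginals $\mu_t$ and bimarginals $R_{t_1,t_2}$), but it can be made rigorous cleanly: the coupling term is the composition of $\bT\mapsto\epsilon\langle\bK,\exp(\bT/\epsilon)\rangle$, whose effective domain is all of $\RR^{N^\ccT}$, with the linear assembly map, so the conjugate-of-composition rule \cite[Thm.~16.3]{rockafellar1970convex} gives exactly this identity with attainment and no closure operation. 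With that substitution your argument goes through; the paper's lifting trick buys an elementary, attained inner maximization at the cost of the monotone-relaxation step, while your splitting buys a more standard Fenchel-type dualization at the cost of the boundary accounting.
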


\begin{proof}
To prove it, we derive a Lagrangian dual to an equivalent problem to \eqref{eq:dual}, and show that for the latter strong duality holds with \eqref{eq:omt_multi_graph_convex_v2}. To this end, note that a problem with the same set of globally optimal solutions as \eqref{eq:dual} is the constrained optimization problem
\begin{align*}
\supwrt[\bU, \lambda, \Lambda]
& \; -\epsilon \langle \bK, \bU \rangle - \sum_{t\in \ccV}(g_{t})^*(- \lambda_t)
- \!\!\!\! \sum_{(t_1,t_2)\in \ccE}(f_{t_1,t_2})^*(- \Lambda_{t_1, t_2}) \\
\text{s.t. } & \quad \log(\bU^{(i_1 \ldots i_{\ccT})}) = \frac{1}{\epsilon} \left(\sum_{t\in \ccV} \lambda_t^{(i_t)}
 + \! \sum_{(t_1,t_2)\in \ccE}  \Lambda_{t_1, t_2}^{(i_{t_1}, i_{t_2})} \right).
\end{align*}
However, the latter is nonconvex due to the nonaffine equality constraint. Nevertheless, since $\bK \geq 0$ the cost function is nonincreasing in $\bU$, and since the logarithm is a monotone increasing function, the above problem has the same globally optimal solution as the relaxed, convex problem with the equality changed for an inequality $\geq$.
Moreover, for this convex problem, by using Lemma~\ref{lem:nonempty_realative_interior} it is easily seen that Slater's condition is fulfilled, and hence strong duality holds.
Next, relaxing the convex inequality constraints with multipliers $\mathbf{Q}^{(i_1 \ldots i_{\ccT})} \geq 0$ we get the Lagrangian
\begin{align*}
& -\epsilon \langle \bK, \bU \rangle - \sum_{t\in \ccV}(g_{t})^*(- \lambda_t)  - \sum_{(t_1,t_2)\in \ccE}(f_{t_1,t_2})^*(- \Lambda_{t_1, t_2}) \\
& + \!\! \sum_{i_1 \ldots i_{\ccT}} \! \mathbf{Q}^{(i_1 \ldots i_{\ccT})} \!\! \left( \! \log(\bU^{(i_1 \ldots i_{\ccT})} ) -  \frac{1}{\epsilon} \! \left(\sum_{t\in \ccV} \lambda_t^{(i_t)} + \!\!\!\! \sum_{(t_1,t_2)\in \ccE} \!\!\!\! \Lambda_{t_1, t_2}^{(i_{t_1}, i_{t_2})} \! \right)  \!\!  \right)
\end{align*}
which separates over $\lambda_t$, $\Lambda_{t_1, t_2}$, and $\bU$. Moreover, we have that $\sum_{i_1 \ldots i_{\ccT}} \! \mathbf{Q}^{(i_1 \ldots i_{\ccT})} \frac{1}{\epsilon} \lambda_t^{(i_t)} \! = \langle 1/\epsilon P_t(\mathbf{Q}), \lambda_t \rangle$, and therefore when taking the supremum over $\lambda_t$ we get
\begin{align*}
\sup_{\lambda_t \in \RR^N} - (g_{t})^*(- \lambda_t) - \langle 1/\epsilon P_t(\mathbf{Q}), \lambda_t \rangle & = (g_{t})^{**}(1/\epsilon P_t(\mathbf{Q}))  = g_{t}(1/\epsilon P_t(\mathbf{Q})),
\end{align*}
where the last equality follows from \cite[Thm.~13.37]{bauschke2017convex}; 
an analogous result holds for $(f_{t_1,t_2})^*$ and $\Lambda_{t_1, t_2}$. The remaining part of the Lagrangian is
$\sup_{\bU \in \RR^{N^\ccT}} -\epsilon \langle \bK, \bU \rangle + \langle \mathbf{Q}, \log(\bU) \rangle$, 
and to find this supremum
we first note that if $\bK^{(i_1 \ldots i_{\ccT})} = 0$, then we must have $\mathbf{Q}^{(i_1 \ldots i_{\ccT})} = 0$ or else the cost function is unbounded. For all other elements, we take the derivative with respect to $\bU^{(i_1 \ldots i_{\ccT})}$ and set it equal to zero,
from which it follows that $\bU^{(i_1 \ldots i_{\ccT})} = \mathbf{Q}^{(i_1 \ldots i_{\ccT})}/(\epsilon \bK^{(i_1 \ldots i_{\ccT})}) > 0$, which is hence the supremum.
Plugging this back into the cost, we get
\begin{align*}
 -\epsilon \langle \bK, \bU \rangle + \langle \mathbf{Q}, \log(\bU) \rangle & =  \sum_{i_1 \ldots i_{\ccT}} -\mathbf{Q}^{(i_1 \ldots i_{\ccT})} + \langle \mathbf{Q}, \log(\mathbf{Q}) \rangle - \langle \mathbf{Q}, \log(\epsilon \bK) \rangle \\
& = \sum_{i_1 \ldots i_{\ccT}} -\mathbf{Q}^{(i_1 \ldots i_{\ccT})} + \langle \mathbf{Q}, \log(\mathbf{Q}) - \log(\epsilon) \rangle + (1/\epsilon) \langle \mathbf{Q}, \bC \rangle,
\end{align*}
together with the constraints that $\mathbf{Q}^{(i_1 \ldots i_{\ccT})} = 0$ if $\bK^{(i_1 \ldots i_{\ccT})} = 0$. But for any element such that $\bK^{(i_1 \ldots i_{\ccT})} = 0$ we have that $\bC^{(i_1 \ldots i_{\ccT})} = \infty$, and the constraints can thus be removed since they are implicitly enforced by the cost function.
Therefore, with the change of variable $\mathbf{Q} = \epsilon\bM$ we recover, up to a constant, the primal problem \eqref{eq:omt_multi_graph_convex}. Since \eqref{eq:omt_multi_graph_convex} has the same optimal value as \eqref{eq:omt_multi_graph_convex_v2}, the result follows.
\end{proof}

\begin{proof}[Proof of Theorem~\ref{thm:proj}]
Note that
$\bK^{(\ell i_0 \ldots i_\ccT)} = \prod_{t = 0}^{\ccT-1} K^{(i_{t}, i_{t+1})}$.
Together with \eqref{eq:U_multi_species_new}, this means that
\begin{align*}
(P_{-1, j}(\bK \odot \bU))^{(\ell, i_j)} & = \sum_{\substack{i_0, \ldots i_{j-1} \\ i_{j+1}, \ldots i_{\ccT}}} \Bigg( \left(\prod_{t = 0}^{\ccT-1} K^{(i_{t}, i_{t+1})} U_{-1,0}^{(\ell, i_{0})}\right)  \left(\prod_{t = 1}^\ccT U_{-1,t}^{(\ell, i_{t})}\right)  \left(\prod_{ t=1}^\ccT u_t^{(i_t)}\right) \Bigg)
\\
& = U_{-1,j}^{(\ell, i_{j})}u_j^{(i_j)} \hat{\Psi}_j^{(\ell, j)} \Psi_j^{(\ell, j)},
\end{align*}
where
\begin{align*}
\hat{\Psi}_j^{(\ell, i_{j})} & = \sum_{i_0, \ldots i_{j-1}} U_{-1,0}^{(\ell, i_{0})} K^{(i_{0}, i_{1})} \prod_{t = 1}^{j-1} U_{-1,t}^{(\ell, i_{t})}u_t^{(i_t)} K^{(i_{t}, i_{t+1})}, \\
\Psi_j^{(\ell, i_{j})} & = \! \sum_{i_{j+1}, \ldots i_{\ccT}} \! U_{-1,\ccT}^{(\ell, i_{\ccT})} u_\ccT^{(i_\ccT)} K^{(i_{\ccT-1}, i_{\ccT})} \! \prod_{t = j+1}^{\ccT-1} \! U_{-1,t}^{(\ell, i_{t})} u_t^{(i_t)} K^{(i_{t-1}, i_{t})}.
\end{align*}
A direct calculation gives that $\hat{\Psi}_j$ and $\Psi_j$ above fulfill the recursive definitions in the theorem, which proves the form of the bimarginal projection for $j = 1, \ldots, \ccT-1$. Next, the form of the bimarginal projections for $j = 0$ and $\ccT$ can be readily verified analogously. Finally, note that
$
(P_j(\bK \odot \bU))^{(i_j)} = \sum_{\ell=1}^L (P_{-1, j}(\bK \odot \bM))^{(\ell, i_j)},
$
which gives the result for the projections and proves the theorem.
\end{proof}

\section{Fenchel conjugates of some functions}\label{app:fenchel}
In all the examples below, let $f : \RR^n \to \RRext$.

\begin{example}
Let $\alpha, \beta \in \RRext$, $\alpha_i \leq \beta_i$ for $i = 1, \ldots, n$, and $[\alpha, \beta] := \{ y \in \RR^n \mid \alpha^{(i)} \leq y^{(i)} \leq \beta^{(i)}, i = 1, \ldots, n\}$. For a set $A \subset \RR$, let $\mathcal{I}_A$ be the characteristic function $\mathcal{I}_A(x) = 1$ if $x \in A$ and $0$ else. The Fenchel conjugate of 
$
f(x) = \indFun_{[\alpha, \beta]}(x)$ is $f^*(x^*) = \sum_{i = 1}^n \Big(( x^* )^{(i)} \beta^{(i)} \mathcal{I}_{\RR_+}( (x^*)^{(i)}) + (x^*)^{(i)} \alpha^{(i)} \mathcal{I}_{\RR_-}( (x^*)^{(i)}) \Big).
$
\end{example}

\begin{example}
Let $p \in (1, \infty)$, let $\sigma > 0$, and let $y \in \RR^n$. The Fenchel conjugate of $ f(x) = \sigma \| x - y \|_p^p$ is $ f^*(x^*) =  \langle x^*,y \rangle + \frac{1}{q \, \sigma^{q-1} \, p^{q-1}} \| x^* \|_q^q$, where $1/p + 1/q = 1$.
\end{example}

\begin{example}
Let $\beta \in \RR^n$ and let $\beta_i > 0$ for $i = 1, \ldots, n$. The Fenchel conjugate of
$f(x) = x\oslash(\beta - x) + \indFun_{[0, \beta]}(x)$ is $f^*(x^*) = \sum_{i=1}^n f^*_i(x^*_i)$, where  $f^*_i(x^*_i) = 0$ if $x^*_i \leq 1/\beta_i$ and $f^*_i(x^*_i) = x^*_i \beta_i - 2 \sqrt{x^*_i\beta_i} + 1$ if $x^*_i > 1/\beta_i$.
\end{example}

\bibliographystyle{siamplain}
\bibliography{}

\begin{thebibliography}{10}

\bibitem{achdou2017mean}
{\sc Y.~Achdou, M.~Bardi, and M.~Cirant}, {\em Mean field games models of
  segregation}, Mathematical Models and Methods in Applied Sciences, 27 (2017),
  pp.~75--113.

\bibitem{altschuler2020polynomial}
{\sc J.~M. Altschuler and E.~Boix-Adsera}, {\em Polynomial-time algorithms for
  multimarginal optimal transport problems with structure}, Mathematical
  Programming, 199 (2023), pp.~1107--1178.

\bibitem{bauschke2017convex}
{\sc H.~Bauschke and P.~Combettes}, {\em Convex analysis and monotone operator
  theory in {H}ilbert spaces}, Springer, Cham, 2nd~ed., 2017.

\bibitem{beier2023unbalanced}
{\sc F.~Beier, J.~von Lindheim, S.~Neumayer, and G.~Steidl}, {\em Unbalanced
  multi-marginal optimal transport}, Journal of Mathematical Imaging and
  Vision, 65 (2023), pp.~394--413.

\bibitem{benamou2000computational}
{\sc J.-D. Benamou and Y.~Brenier}, {\em A computational fluid mechanics
  solution to the {M}onge-{K}antorovich mass transfer problem}, Numerische
  Mathematik, 84 (2000), pp.~375--393.

\bibitem{benamou2015bregman}
{\sc J.-D. Benamou, G.~Carlier, M.~Cuturi, L.~Nenna, and G.~Peyr{\'e}}, {\em
  Iterative {B}regman projections for regularized transportation problems},
  SIAM Journal on Scientific Computing, 37 (2015), pp.~A1111--A1138.

\bibitem{BenCarDiNen19}
{\sc J.-D. Benamou, G.~Carlier, S.~Di~Marino, and L.~Nenna}, {\em An entropy
  minimization approach to second-order variational mean-field games},
  Mathematical Models and Methods in Applied Sciences, 29 (2019),
  pp.~1553--1583.

\bibitem{bensoussan2013mean}
{\sc A.~Bensoussan, J.~Frehse, and P.~Yam}, {\em Mean field games and mean
  field type control theory}, Springer, New York, NY, 2013.

\bibitem{bensoussan2018mean}
{\sc A.~Bensoussan, T.~Huang, and M.~Lauri{\`e}re}, {\em Mean field control and
  mean field game models with several populations}, Minimax Theory and its
  Applications, 3 (2018), pp.~173--209.

\bibitem{caines2018mean}
{\sc P.~Caines, M.~Huang, and R.~Malham{\'e}}, {\em Mean field games}, in
  Handbook of Dynamic Game Theory, T.~Ba{\c{s}}ar and G.~Zaccour, eds.,
  Springer, Cham, 2018, pp.~345--372.

\bibitem{caluya2021wasserstein}
{\sc K.~F. Caluya and A.~Halder}, {\em Wasserstein proximal algorithms for the
  schr{\"o}dinger bridge problem: Density control with nonlinear drift}, IEEE
  Transactions on Automatic Control, 67 (2021), pp.~1163--1178.

\bibitem{cardaliaguet2015second}
{\sc P.~Cardaliaguet, P.~Graber, A.~Porretta, and D.~Tonon}, {\em Second order
  mean field games with degenerate diffusion and local coupling}, Nonlinear
  Differential Equations and Applications NoDEA, 22 (2015), pp.~1287--1317.

\bibitem{chen2016relation}
{\sc Y.~Chen, T.~Georgiou, and M.~Pavon}, {\em On the relation between optimal
  transport and {S}chr{\"o}dinger bridges: {A} stochastic control viewpoint},
  Journal of Optimization Theory and Applications, 169 (2016), pp.~671--691.

\bibitem{chen2016optimalPartI}
{\sc Y.~Chen, T.~Georgiou, and M.~Pavon}, {\em Optimal steering of a linear
  stochastic system to a final probability distribution, part {I}}, IEEE
  Transactions on Automatic Control, 61 (2016), pp.~1158--1169.

\bibitem{chen2017optimal}
{\sc Y.~Chen, T.~Georgiou, and M.~Pavon}, {\em Optimal transport over a linear
  dynamical system}, IEEE Transactions on Automatic Control, 62 (2017),
  pp.~2137--2152.

\bibitem{chen2018steering}
{\sc Y.~Chen, T.~Georgiou, and M.~Pavon}, {\em Steering the distribution of
  agents in mean-field games system}, Journal of Optimization Theory and
  Applications, 179 (2018), pp.~332--357.

\bibitem{chen2020stochastic}
{\sc Y.~Chen, T.~Georgiou, and M.~Pavon}, {\em Stochastic control liasons:
  {R}ichard {S}inkhorn meets {G}aspard {M}onge on a {S}chr\"odinger bridge},
  SIAM Review, 63 (2021), pp.~249--313.

\bibitem{chen2016optimal}
{\sc Y.~Chen, T.~T. Georgiou, and M.~Pavon}, {\em Optimal steering of a linear
  stochastic system to a final probability distribution, part i}, IEEE
  Transactions on Automatic Control, 61 (2016), pp.~1158--1169.

\bibitem{chen2018state}
{\sc Y.~Chen and J.~Karlsson}, {\em State tracking of linear ensembles via
  optimal mass transport}, IEEE Control Systems Letters, 2 (2018),
  pp.~260--265.

\bibitem{chizat2018scaling}
{\sc L.~Chizat, G.~Peyr{\'e}, B.~Schmitzer, and F.-X. Vialard}, {\em Scaling
  algorithms for unbalanced optimal transport problems}, Mathematics of
  Computation, 87 (2018), pp.~2563--2609.

\bibitem{chizat2018unbalanced}
{\sc L.~Chizat, G.~Peyr{\'e}, B.~Schmitzer, and F.-X. Vialard}, {\em Unbalanced
  optimal transport: Dynamic and kantorovich formulations}, Journal of
  Functional Analysis, 274 (2018), pp.~3090--3123.

\bibitem{cirant2015multi}
{\sc M.~Cirant}, {\em Multi-population mean field games systems with {N}eumann
  boundary conditions}, Journal de Math{\'e}matiques Pures et Appliqu{\'e}es,
  103 (2015), pp.~1294--1315.

\bibitem{como2017resilient}
{\sc G.~Como}, {\em On resilient control of dynamical flow networks}, Annual
  Reviews in Control, 43 (2017), pp.~80--90.

\bibitem{cuturi2013sinkhorn}
{\sc M.~Cuturi}, {\em Sinkhorn distances: {L}ightspeed computation of optimal
  transport}, in Advances in Neural Information Processing Systems (NIPS),
  2013, pp.~2292--2300.

\bibitem{dai1991stochastic}
{\sc P.~Dai~Pra}, {\em A stochastic control approach to reciprocal diffusion
  processes}, Applied mathematics and Optimization, 23 (1991), pp.~313--329.

\bibitem{djehiche2017mean}
{\sc B.~Djehiche, A.~Tcheukam, and H.~Tembine}, {\em Mean-field-type games in
  engineering}, AIMS Electronics and Electrical Engineering, 1 (2017),
  pp.~18--73.

\bibitem{elvander2020multi}
{\sc F.~Elvander, I.~Haasler, A.~Jakobsson, and J.~Karlsson}, {\em
  Multi-marginal optimal transport using partial information with applications
  in robust localization and sensor fusion}, Signal Processing, 171 (2020),
  p.~107474.

\bibitem{fan2022complexity}
{\sc J.~Fan, I.~Haasler, J.~Karlsson, and Y.~Chen}, {\em On the complexity of
  the optimal transport problem with graph-structured cost}, in International
  Conference on Artificial Intelligence and Statistics, PMLR, 2022,
  pp.~9147--9165.

\bibitem{farhangi2010path}
{\sc H.~Farhangi}, {\em The path of the smart grid}, IEEE Power Energy Mag., 8
  (2010).

\bibitem{fleming1975deterministic}
{\sc W.~Fleming and R.~Rishel}, {\em Deterministic and stochastic optimal
  control}, Springer-Verlag, New York, N.Y., 1975.

\bibitem{Follmer88}
{\sc H.~F{\"o}llmer}, {\em Random fields and diffusion processes}, in {\'E}cole
  d'{\'E}t{\'e} de Probabilit{\'e}s de Saint-Flour XV--XVII, 1985--87, P.-L.
  Hennequin, ed., vol.~1362 of Lecture Notes in Mathematics, Springer, Berlin,
  Heidelberg, 1988, pp.~101--203.

\bibitem{gangbo1998optimal}
{\sc W.~Gangbo and A.~{\'S}wi{k{e}}ch}, {\em Optimal maps for the
  multidimensional {M}onge-{K}antorovich problem}, Communications on Pure and
  Applied Mathematics: A Journal Issued by the Courant Institute of
  Mathematical Sciences, 51 (1998), pp.~23--45.

\bibitem{gentil2017analogy}
{\sc I.~Gentil, C.~L{\'e}onard, and L.~Ripani}, {\em About the analogy between
  optimal transport and minimal entropy}, Annales de la Facult{\'e} des
  sciences de Toulouse: Math{\'e}matiques, 26 (2017), pp.~569--600.

\bibitem{georgiou2008metrics}
{\sc T.~T. Georgiou, J.~Karlsson, and M.~S. Takyar}, {\em Metrics for power
  spectra: an axiomatic approach}, IEEE Transactions on Signal Processing, 57
  (2008), pp.~859--867.

\bibitem{haasler2020optimal}
{\sc I.~Haasler, Y.~Chen, and J.~Karlsson}, {\em Optimal steering of ensembles
  with origin-destination constraints}, IEEE Control Systems Letters, 5 (2020),
  pp.~881--886.

\bibitem{haasler2021multimarginal}
{\sc I.~Haasler, A.~Ringh, Y.~Chen, and J.~Karlsson}, {\em Multimarginal
  optimal transport with a tree-structured cost and the {S}chr\"odinger bridge
  problem}, SIAM Journal on Control and Optimization, 59 (2021),
  pp.~2428--2453.

\bibitem{haasler2021scalable}
{\sc I.~Haasler, A.~Ringh, Y.~Chen, and J.~Karlsson}, {\em Scalable computation
  of dynamic flow problems via multimarginal graph-structured optimal
  transport}, Mathematics of Operations Research,  (in press),
  \url{https://doi.org/10.1287/moor.2021.148}.
\newblock arXiv preprint arXiv:2106.14485.

\bibitem{haasler2021control}
{\sc I.~Haasler, A.~Ringh, and J.~Karlsson}, {\em Control and estimation of
  ensembles via strucutred optimal transport: A computational approach based on
  entropy-regularized multimarginal optimal transport}, IEEE Control Systems
  Magazine, 41 (2021), pp.~50--69.

\bibitem{haasler2020multi}
{\sc I.~Haasler, R.~Singh, Q.~Zhang, J.~Karlsson, and Y.~Chen}, {\em
  Multi-marginal optimal transport and probabilistic graphical models}, IEEE
  Transactions on Information Theory, 67 (2021), pp.~4647--4668.

\bibitem{hijab1984asymptotic}
{\sc O.~Hijab}, {\em Asymptotic bayesian estimation of a first order equation
  with small diffusion}, The Annals of Probability, 12 (1984), pp.~890--902.

\bibitem{hindawi2011mass}
{\sc A.~Hindawi, J.-B. Pomet, and L.~Rifford}, {\em Mass transportation with
  {LQ} cost functions}, Acta applicandae mathematicae, 113 (2011),
  pp.~215--229.

\bibitem{huang2012social}
{\sc M.~Huang, P.~Caines, and R.~Malham{\'e}}, {\em Social optima in mean field
  {LQG} control: centralized and decentralized strategies}, IEEE Transactions
  on Automatic Control, 57 (2012), pp.~1736--1751.

\bibitem{huang2006large}
{\sc M.~Huang, R.~Malham{\'e}, and P.~Caines}, {\em Large population stochastic
  dynamic games: closed-loop {McKean}-{Vlasov} systems and the {Nash} certainty
  equivalence principle}, Communications in Information \& Systems, 6 (2006),
  pp.~221--252.

\bibitem{jovanovic1988anonymous}
{\sc B.~Jovanovic and R.~Rosenthal}, {\em Anonymous sequential games}, Journal
  of Mathematical Economics, 17 (1988), pp.~77--87.

\bibitem{karlsson2017generalized}
{\sc J.~Karlsson and A.~Ringh}, {\em Generalized {S}inkhorn iterations for
  regularizing inverse problems using optimal mass transport}, SIAM Journal on
  Imaging Sciences, 10 (2017), pp.~1935--1962.

\bibitem{krishnan2018distributed}
{\sc V.~Krishnan and S.~Mart{\'\i}nez}, {\em Distributed optimal transport for
  the deployment of swarms}, in 2018 IEEE Conference on Decision and Control
  (CDC), IEEE, 2018, pp.~4583--4588.

\bibitem{lachapelle2011mean}
{\sc A.~Lachapelle and M.-T. Wolfram}, {\em On a mean field game approach
  modeling congestion and aversion in pedestrian crowds}, Transportation
  research part B: methodological, 45 (2011), pp.~1572--1589.

\bibitem{lamond1981bregman}
{\sc B.~Lamond and N.~Stewart}, {\em Bregman's balancing method},
  Transportation Research Part B: Methodological, 15 (1981), pp.~239--248.

\bibitem{lasry2007mean}
{\sc J.-M. Lasry and P.-L. Lions}, {\em Mean field games}, Japanese journal of
  mathematics, 2 (2007), pp.~229--260.

\bibitem{lee2021controlling}
{\sc W.~Lee, S.~Liu, H.~Tembine, W.~Li, and S.~Osher}, {\em Controlling
  propagation of epidemics via mean-field control}, SIAM Journal on Applied
  Mathematics, 81 (2021), pp.~190--207.

\bibitem{leonard2012schrodinger}
{\sc C.~L{\'e}onard}, {\em From the {S}chr{\"o}dinger problem to the
  {M}onge--{K}antorovich problem}, Journal of Functional Analysis, 262 (2012),
  pp.~1879--1920.

\bibitem{leonard2013schrodinger}
{\sc C.~L{\'e}onard}, {\em A survey of the {S}chr{\"o}dinger problem and some
  of its connections with optimal transport}, Discrete \& Continuous Dynamical
  Systems - A, 34 (2014), pp.~1533--1574.

\bibitem{liero2018optimal}
{\sc M.~Liero, A.~Mielke, and G.~Savar{\'e}}, {\em Optimal entropy-transport
  problems and a new hellinger--kantorovich distance between positive
  measures}, Inventiones mathematicae, 211 (2018), pp.~969--1117.

\bibitem{lin2022complexity}
{\sc T.~Lin, N.~Ho, M.~Cuturi, and M.~Jordan}, {\em On the complexity of
  approximating multimarginal optimal transport}, Journal of Machine Learning
  Research, 23 (2022), pp.~1--43.

\bibitem{luo1993convergence}
{\sc Z.-Q. Luo and P.~Tseng}, {\em On the convergence rate of dual ascent
  methods for linearly constrained convex minimization}, Mathematics of
  Operations Research, 18 (1993), pp.~846--867.

\bibitem{meyer2014road}
{\sc G.~Meyer and S.~Beiker}, eds., {\em Road Vehicle Automation}, Springer,
  Cham, 2014.

\bibitem{nenna2016numerical}
{\sc L.~Nenna}, {\em Numerical methods for multi-marginal optimal
  transportation}, PhD thesis, PSL Research University, 2016.

\bibitem{nocedal2006numerical}
{\sc J.~Nocedal and S.~Wright}, {\em Numerical optimization}, Springer, New
  York, NY, 2nd~ed., 2006.

\bibitem{ortega1970iterative}
{\sc J.~Ortega and W.~Rheinboldt}, {\em Iterative solution of nonlinear
  equations in several variables}, Academic Press, New York, NY, 1970.

\bibitem{pass2015multi}
{\sc B.~Pass}, {\em Multi-marginal optimal transport: theory and applications},
  ESAIM: Mathematical Modelling and Numerical Analysis, 49 (2015),
  pp.~1771--1790.

\bibitem{peyre2019computational}
{\sc G.~Peyr{\'e} and M.~Cuturi}, {\em Computational optimal transport: With
  applications to data science}, Foundations and Trends{\textregistered} in
  Machine Learning, 11 (2019), pp.~355--607.

\bibitem{piccoli2014generalized}
{\sc B.~Piccoli and F.~Rossi}, {\em Generalized wasserstein distance and its
  application to transport equations with source}, Archive for Rational
  Mechanics and Analysis, 211 (2014), pp.~335--358.

\bibitem{astrom1970introduction}
{\sc K.~\r{A}str\"{o}m}, {\em Introduction to stochastic control theory},
  Dover, Mineola, NY, 2006.
\newblock Unabridged republication of original published by Academic Press,
  1970.

\bibitem{ringh2021efficient}
{\sc A.~Ringh, I.~Haasler, Y.~Chen, and J.~Karlsson}, {\em Efficient
  computations of multi-species mean field games via graph-structured optimal
  transport}, in 2021 60th IEEE Conference on Decision and Control (CDC), IEEE,
  2021, pp.~5261--5268.

\bibitem{rockafellar1970convex}
{\sc R.~Rockafellar}, {\em Convex analysis}, Princeton Mathematical Series,
  Princeton University Press, Princeton, NJ, 1970.

\bibitem{ruschendorf1995optimal}
{\sc L.~R{\"u}schendorf}, {\em Optimal solutions of multivariate coupling
  problems}, Applicationes Mathematicae, 23 (1995), pp.~325--338.

\bibitem{ruschendorf2002n}
{\sc L.~R{\"u}schendorf and L.~Uckelmann}, {\em On the n-coupling problem},
  Journal of multivariate analysis, 81 (2002), pp.~242--258.

\bibitem{singh2020inference}
{\sc R.~Singh, I.~Haasler, Q.~Zhang, J.~Karlsson, and Y.~Chen}, {\em Inference
  with aggregate data in probabilistic graphical models: An optimal transport
  approach}, IEEE Transactions on Automatic Control, 67 (2022), pp.~4483--4497.

\bibitem{sinigaglia2021optimal}
{\sc C.~Sinigaglia, S.~Bandyopadhyay, M.~Quadrelli, and F.~Braghin}, {\em
  Optimal-transport-based control of particle swarms for orbiting rainbows
  concept}, Journal of Guidance, Control, and Dynamics, 44 (2021),
  pp.~2108--2117.

\bibitem{teh2002propagation}
{\sc Y.~Teh and M.~Welling}, {\em The unified propagation and scaling
  algorithm}, Advances in neural information processing systems,  (2002),
  pp.~953--960.

\bibitem{todorov2008general}
{\sc E.~Todorov}, {\em General duality between optimal control and estimation},
  in 2008 47th IEEE Conference on Decision and Control, IEEE, 2008,
  pp.~4286--4292.

\bibitem{tseng1993dual}
{\sc P.~Tseng}, {\em Dual coordinate ascent methods for non-strictly convex
  minimization}, Mathematical programming, 59 (1993), pp.~231--247.

\bibitem{villani2003topics}
{\sc C.~Villani}, {\em Topics in optimal transportation}, American Mathematical
  Society, Providence, RI, 2003.

\bibitem{watts1998collective}
{\sc D.~J. Watts and S.~H. Strogatz}, {\em Collective dynamics of 'small-world'
  networks}, Nature, 393 (1998), p.~440.

\end{thebibliography}

\end{document}